\newcommand{\figdraft}{false}%
\newcommand{\figfile}[1]{#1}%
\newcommand{\figwidth}{0.28\textwidth}%
\newcommand{\figstretch}{4.65}%
\theoremstyle{plain}%
\newtheorem{theorem}{Theorem}[]%
\newtheorem{lemma}[theorem]{Lemma}%
\newtheorem{assumption}[theorem]{Assumption}%
\newenvironment{Proof}[1][.]%
 {\begin{proof}[Proof #1.]}%
 {\end{proof}}
\newcommand{\ul}[1]{\underline{#1}}
\newcommand{\fspace}[1]{{\mathsf{#1}}}
\newcommand{\fspaceL}{\fspace{L}}
\newcommand{\ol}[1]{{\overline{#1}}}
\newcommand{\phase}{{\varphi}}
\newcommand{\Rset}{{\mathbb{R}}}
\newcommand{\Zset}{{\mathbb{Z}}}
\newcommand{\oointerval}[2]{(#1,\,#2)}%
\newcommand{\ccinterval}[2]{[#1,\,#2]}%
\newcommand{\DO}[1]{{O\at{#1}}}
\newcommand{\fin}{{\rm fin}}
\newcommand{\ini}{{\rm ini}}
\newcommand{\const}{{\rm const}}
\newcommand{\sym}{{\rm sym}}
\newcommand{\tdots}{{...}}%
\newcommand{\sympl}{{\rm sympl}}
\newcommand{\pair}[2]{{\left({#1},\,{#2}\right)}}
\newcommand{\skp}[2]{{\left\langle{#1},\,{#2}\right\rangle}}
\newcommand{\nskp}[2]{{\langle{#1},\,{#2}\rangle}}
\newcommand{\at}[1]{{\left({#1}\right)}}
\newcommand{\nat}[1]{(#1)}
\newcommand{\bat}[1]{{\big(#1\big)}}
\newcommand{\triple}[3]{{\left({#1},\,{#2},\,{#3}\right)}}
\newcommand{\bigpar}{\par\quad\newline\noindent}
\newcommand{\wt}[1]{{\widetilde{#1}}}
\newcommand{\wh}[1]{{\widehat{#1}}}
\newcommand{\jump}[1]{{|\![#1]\!|}}
\newcommand{\norm}[1]{\|{#1}\|}
\newcommand{\dint}[1]{\,\mathrm{d}#1}
\newcommand{\La}{{\Lambda}}
\newcommand{\ga}{{\gamma}}
\newcommand{\eps}{{\varepsilon}}
\newcommand{\la}{{\lambda}}
\newcommand{\si}{{\sigma}}
\newcommand{\om}{{\omega}}
\newcommand{\MiLagr}{j}
\newcommand{\calA}{\mathcal{A}}
\newcommand{\calB}{\mathcal{B}}
\newcommand{\calC}{\mathcal{C}}
\newcommand{\calF}{\mathcal{F}}
\newcommand{\calH}{\mathcal{H}}
\newcommand{\calI}{\mathcal{I}}
\newcommand{\calN}{\mathcal{N}}
\newcommand{\calP}{\mathcal{P}}
\newcommand{\calT}{\mathcal{T}}
\newcommand{\calW}{\mathcal{W}}
\newcommand{\calX}{\mathcal{X}}
\newcommand{\calY}{\mathcal{Y}}
\newcommand{\refcite}[1]{\cite{#1}}
\newcommand{\fncite}[1]{$^\text{\cite{#1}}$}
\begin{document}%
%
%
%-------------------------------------------------------------------------------------------
\title{Oscillatory Waves in Discrete Scalar Conservation Laws}%
\date{\today}%
\author{%
Michael Herrmann%
\thanks{
    Saarland University, Department of Mathematics,
    %University of Oxford, Mathematical Institute,
    %24-29 St Giles', Oxford OX1 3LB, United Kingdom,
    {\tt{michael.herrmann@math.uni-sb.de}}
}}%
\maketitle
%-------------------------------------------------------------------------------------------
%
%
% \def\theequation{\thesection.\arabic{equation}}
%
%-------------------------------------------------------------------------------------------
%                      abstract
%-------------------------------------------------------------------------------------------
\begin{abstract}%
We study Hamiltonian difference schemes for scalar conservation laws with monotone flux
function and establish the existence of a three-parameter family of periodic travelling waves
(wavetrains). The proof is based on an integral equation for the dual wave profile and
employs constrained maximization as well as the invariance properties of a gradient flow. We
also discuss the approximation of wavetrains and present some numerical results.
\end{abstract}%
%
%-------------------------------------------------------------------------------------------
%                      MSC and keywords
%-------------------------------------------------------------------------------------------
%
\quad\newline\noindent%
\begin{minipage}[t]{0.15\textwidth}%
Keywords: %
\end{minipage}%
\begin{minipage}[t]{0.8\textwidth}%
 \emph{conservation laws}, %
 \emph{difference schemes}, %
 \emph{dispersive shocks},  \\%
 \emph{travelling waves}, %
 \emph{Hamiltonian lattices}, %
 \emph{variational integrators} %
\end{minipage}%
\medskip
\newline\noindent
\begin{minipage}[t]{0.15\textwidth}%
MSC (2000): %
\end{minipage}%
\begin{minipage}[t]{0.8\textwidth}%
35L60, %Nonlinear first-order hyperbolic equations, (in 35Lxx=Hyperbolic equations and systems )
37K60, %Lattice dynamics (in 37Kxx=Infinite-dimensional Hamiltonian systems)
47J30  %Variational methods (in 47Jxx=equations and inequalities involving nonlinear operators)
\end{minipage}%
%
%
%
%-------------------------------------------------------------------------------------------
%                      table of contents
%-------------------------------------------------------------------------------------------
%
%
\setcounter{tocdepth}{5} %
\setcounter{secnumdepth}{4}
{\scriptsize{\tableofcontents}}%
%
%-------------------------------------------------------------------------------------------
%                      body
%-------------------------------------------------------------------------------------------
%

%-----------------------------------------------------------------------------------------------
\section{Introduction}\label{sec:intro}
%-----------------------------------------------------------------------------------------------
%
This paper is concerned with oscillatory patterns in the nonlinear lattice equation
\begin{align}
\label{Intro.Lattice}
2\,\dot{u}_\MiLagr+\Phi^\prime\at{u_{j+1}}-\Phi^\prime\at{u_{j-1}}=0,\qquad\qquad{u}\in\Rset,
\quad{ t\geq0 } , \quad { j } \in\Zset ,
\end{align}
which is, up to an appropriate scaling, a \emph{centred} difference scheme for the scalar
conservation law
\begin{align}
\label{Intro.PDE1}
\partial_\tau\bar{u}+\partial_\xi\Phi^\prime\at{\bar{u}}=0,\qquad\qquad\bar{u}\in\Rset,
\quad\tau\geq0, \quad\xi\in\Rset.
\end{align}
Although the lattice cannot be used for the approximate computation of (non-smooth) solutions
of \eqref{Intro.PDE1}, there exist several reasons why it seems worth investigating the
dynamical properties of \eqref{Intro.Lattice} in greater detail. First, both the lattice and
the PDE have the \emph{same} Hamiltonian structure, and \eqref{Intro.Lattice} can be regarded
as a variational integrator for \eqref{Intro.PDE1} with respect to the space discretization,
see appendix \ref{sec:appendix}. Studying \eqref{Intro.Lattice} therefore allows to
understand in which aspects quasilinear Hamiltonian PDEs differ from their spatially discrete
counterparts. Second, the lattice is, similar to Korteweg-de Vries-type (KdV) equations, a
\emph{dispersive regularization} of \eqref{Intro.PDE1}, and hence it generates
\emph{dispersive shocks} instead of Lax shocks. These dispersive shocks describe the
fundamental mode of self-thermalization in dispersive Hamiltonian systems but are well
understood only for systems that are completely integrable. The ODE system
\eqref{Intro.Lattice} provides a class of \emph{non-integrable} examples which can be
simulated effectively, for instance using variational integrators for the time
discretization. Finally, discrete conservation laws provide toy models for more complicate
Hamiltonian lattices. Fermi-Pasta-Ulam (FPU) chains, for instance, are equivalent to
difference schemes for the so called $p$-system, which is a nonlinear hyperbolic system of
two conservation laws.
\par
In what follows we focus on a particular aspect of the lattice dynamics and investigate
special coherent structures, namely \emph{wavetrains}. These are periodic travelling wave
solutions to \eqref{Intro.Lattice} and therefore linked to nonlinear
advance-delay-differential equations.
\par
We emphasize that all subsequent considerations require a \emph{centred} difference operator
in \eqref{Intro.Lattice} as only this one gives rise to a Hamiltonian lattice. Other discrete
scalar conservation laws, as for instance upwind schemes, have different dynamical properties
and are excluded.
%
%
%-----------------------------------------------------------------------------------------------
\subsection{Dispersive shocks and wavetrains}%\label{sec:intro}
%-----------------------------------------------------------------------------------------------
%

The relation between \eqref{Intro.Lattice} and \eqref{Intro.PDE1} manifests under the
\emph{hyperbolic scaling}
\begin{align}%
\label{Intro.Scaling}
\tau=\eps{t},\qquad\xi=\eps{j},\qquad
u_j\at{t}=\bar{u}\at{\eps{t},\eps{j}},
\end{align}
where $0<\eps\ll1$ is the scaling parameter, and $\tau$ and $\xi$ denote the
\emph{macroscopic} time and particle index, respectively. On a formal level we can expand
differences in powers of differential operators, i.e.,
\begin{align*}%
\bar{u}\at{\xi+\eps}-\bar{u}\at{\xi-\eps}=
2\partial_\xi{\bar{u}}\at{\xi}+
\tfrac{1}{3}\eps^2\partial_\xi^3{\bar{u}}\at{\xi}+\DO{\eps^5},
\end{align*}%
so that, to leading order in $\eps$, the lattice dynamics is governed by the KdV-type PDE
\begin{align}
\notag%\label{Intro.PDE2}
\partial_\tau{\bar{u}}+\at{\partial_\xi+
\tfrac{1}{6}\eps^2\partial_\xi^3}\Phi^\prime\at{\bar{u}}=0,
\end{align}
which is in fact a dispersive regularization of \eqref{Intro.PDE1}.
\par
\begin{figure}[t!]%
\centering{%
\renewcommand{\arraystretch}{\figstretch}%
\begin{tabular}{lcr}%
\begin{minipage}{\figwidth}%
\includegraphics[width=\textwidth, draft=\figdraft]%
{\figfile{ds_1_11}}%
\end{minipage}%
&%
\begin{minipage}{\figwidth}%
\includegraphics[width=\textwidth, draft=\figdraft]%
{\figfile{ds_1_12}}%
\end{minipage}%
&
\begin{minipage}{\figwidth}%
\includegraphics[width=\textwidth, draft=\figdraft]%
{\figfile{ds_1_13}}%
\end{minipage}%
\\%
\begin{minipage}{\figwidth}%
\includegraphics[width=\textwidth, draft=\figdraft]%
{\figfile{ds_1_14}}%
\end{minipage}%
&%
\begin{minipage}{\figwidth}%
\includegraphics[width=\textwidth, draft=\figdraft]%
{\figfile{ds_1_15}}%
\end{minipage}%
&
\begin{minipage}{\figwidth}%
\includegraphics[width=\textwidth, draft=\figdraft]%
{\figfile{ds_1_16}}%
\end{minipage}%
\end{tabular}%
}
\caption{%
Numerical lattice solution with periodic boundary conditions for the data from
\eqref{Intro.Example1}: Snapshots of $u_j$ against the macroscopic particle index $j/N$ at
several macroscopic times $\tau$; the vertical lines in the lower right picture mark the
positions for the magnifications in Figure~\ref{Fig:Intro.2}. \emph{Interpretation:} Instead
of Lax shocks the lattice generates dispersive shocks with strong microscopic oscillations.
}%
\label{Fig:Intro.1}
\vspace{0.02\textwidth}%
\centering{%
\renewcommand{\arraystretch}{\figstretch}%
\begin{tabular}{lcr}%
\begin{minipage}{\figwidth}%
\includegraphics[width=\textwidth, draft=\figdraft]%
{\figfile{ds_1_21}}%
\end{minipage}%
&%
\begin{minipage}{\figwidth}%
\includegraphics[width=\textwidth, draft=\figdraft]%
{\figfile{ds_1_22}}%
\end{minipage}%
&%
\begin{minipage}{\figwidth}%
\includegraphics[width=\textwidth, draft=\figdraft]%
{\figfile{ds_1_23}}%
\end{minipage}%
\end{tabular}%
}
\caption{%
Magnifications of the oscillations from Figure~\ref{Fig:Intro.1} in three selected points:
Black points are numerical data, Grey curves represent interpolating splines and are drawn
for better illustration. \emph{Interpretation:} The microscopic oscillations can be described
by a modulated travelling wave. In other words, the local oscillations are generated by a single
wavetrain whose parameters change on the macroscopic scale.
}%
\label{Fig:Intro.2}
\end{figure}%
As already mentioned, a key feature of all dispersive regularizations of \eqref{Intro.PDE1}
are dispersive shocks.\fncite{HL91,El05,AMSMSP:DHR,MX10} For illustration, and to motivate
our analytical investigations, we now describe the formation of such dispersive shocks in
numerical simulations of \eqref{Intro.Lattice}. For convenience we shall consider spatially
periodic solutions with $u_j=u_{j+N}$ for some $N\gg1$, which then defines the natural
scaling parameter $\eps=1/N$. Starting with long-wave-length initial data
\begin{align*}
u_j\at{0}=\bar{u}_\ini\at{j/N},
\end{align*}
where $\bar{u}_\ini$ is a smooth and $1$-periodic function in $\xi$, we can solve
\eqref{Intro.Lattice} numerically by using the variational integrator
\eqref{App:Symplectic.Integrator}. A typical example is depicted in Figure~\ref{Fig:Intro.1}
for the data
\begin{align}
\label{Intro.Example1}
\Phi\at{u}=\tfrac{1}{2}u^2+\tfrac{1}{4}u^4
,\quad
N=4000%
,\quad\bar{u}_\ini\at{\xi}=1-\tfrac{13}{20}\sin\at{2\pi\xi}.
\end{align}
For small macroscopic times $\tau$, we observe that the data remain in the long-wave-length
regime, so we can expect that the lattice solutions converge as $\eps\to0$ in some strong
sense to a smooth solution of \eqref{Intro.PDE1}. This convergence is not surprising in view
of Strang's theorem\fncite{Str64}, see also the discussion in Ref.~\refcite{GL88,HL91}. At
time $\tau\approx0.04$, however, the solution to the PDE \eqref{Intro.PDE1} forms a Lax
shock, which then propagates with speed $s$ according to the Rankine-Hugoniot jump condition
$s\jump{\bar{u}}=\jump{\Phi^\prime\at{\bar{u}}}$. After the onset of the shock singularity,
the lattice solutions do not converge anymore to a solution of \eqref{Intro.PDE1}, not even
in a weak sense, but exhibit strong microscopic oscillations. These oscillations spread out
in space and time, and constitute the dispersive shock. This phenomenon is fascinating from
both the mathematical and the physical point of view because the formation of dispersive
shocks can be regarded as a self-thermalization of the nonlinear Hamiltonian system. We refer
to Ref.~\refcite{DH08,HR10a} for more details including a thermodynamic discussion of
dispersive FPU shocks.
\par%
The observation that certain difference schemes for hyperbolic conservation laws produce
dispersive shocks is not new, see for instance Ref.~\refcite{Lax86,GL88}, which investigate
dispersive shocks in
\begin{align}
\notag%\label{KvM.Scheme}
2\,\dot{v}_\MiLagr+v_j\at{v_{j+1}-v_{j-1}}=0.
\end{align}
This lattice also belongs to the class of equations considered here as it transforms via
$u_j=\ln{v_j}$ into \eqref{Intro.Lattice} with $\Phi^\prime=\exp$, which is the completely
integrable Kac-von Moerbeke lattice\fncite{KvM75}.
\bigpar
The key observation in our context is that the oscillations within a dispersive shock exhibit
the typical behaviour of a modulated oscillation. As illustrated in Figure~\ref{Fig:Intro.2},
the local oscillations in the vicinity of a given macroscopic point $\pair{\tau}{\xi}$
resemble a periodic profile function, and we can expect this profile to be generated by a
single wavetrain. The parameters of this wavetrain, however, depend on $\tau$ and $\xi$,
which can be seen from the fact that each magnification in Figure~\ref{Fig:Intro.2} displays
a another amplitude, wave number, and average.
\par
The goal of this paper is to prove the existence of a three-parameter family of wavetrains
for a huge class of nonlinear potentials $\Phi$. Since the only crucial assumption we have to
make is strict convexity of $\Phi$, our results cover a large number of non-integrable
variants of \eqref{Intro.Lattice}. Due to rigorous results for integrable systems, such as
the KdV equation\fncite{LL83,LLV93} and the Toda chain\fncite{Kam91,DM98}, we expect the
parameter modulation within a dispersive shock to be governed by a variant of Whitham's
modulation equations.\fncite{Whi65,Whi74} The formal derivation and investigation of this
Whitham system, however, is left for future research. We also do not justify that the
oscillations within dispersive shocks take in fact the form of modulated wavetrains. For a
rigorous proof in non-integrable systems we still lack the analytical tools; a reliable
numerical justification would be possible, and was carried out for FPU in
Ref.~\refcite{DH08}, but is beyond the scope of this paper.
\bigpar
A travelling wave is a special solution to \eqref{Intro.Lattice} with
\begin{align}
\label{TWAnsatz.U}
u_j\at{t}=U\at{kj-\om{t}},
\end{align}
where $k$ is the \emph{wave number}, $\omega$ is the \emph{frequency}, and the \emph{profile}
$U$ depends on the \emph{phase variable} $\phase=kj-\omega{t}$. In this paper we are solely
interested in wavetrains, which have periodic $U$, but mention that one might also study
\emph{solitons} and \emph{fronts} having homoclinic and heteroclinic profiles, respectively.
\par
Splitting $U$ into its constant and zero average part via $U\at\phase=v+V\at\phase$, we infer
from \eqref{Intro.Lattice} that each wavetrain satisfies
\begin{align}
\label{TWEquation.Version1}
\om\tfrac{\dint}{\dint{\phase}}V=\nabla_k\Phi^\prime\at{v+V},
\end{align}
where $\nabla_k$ is defined by
\begin{align}
\notag%\label{Difference.Operator.k}
\at{\nabla_k{P}}\at\phase=\tfrac{1}{2}\bat{P\at{\phase+k}-P\at{\phase-k}}.
\end{align}
%
%
%-----------------------------------------------------------------------------------------------
\subsection{Main result and organisation of the paper}%\label{sec:intro}
%-----------------------------------------------------------------------------------------------
%
Since the wavetrain equation \eqref{TWEquation.Version1} involves both advance and delay
terms there is no notion of an initial value problem, and one has to use rather sophisticated
methods to establish the existence of solutions. Possible candidates, which have proven to be
powerful for other Hamiltonian lattices, are rigorous perturbation arguments\fncite{FP99},
spatial dynamics with centre manifold reduction\fncite{Ioo00,IJ05}, critical point
techniques\fncite{SW97,PP00,SZ07}, and constrained optimization\fncite{FW94,FV99,Her10a}.
\par
Our approach also exploits the underlying variational structure and restates
\eqref{TWEquation.Version1} as
\begin{align}
\label{TWEquation.Abstract}
\si\Psi^\prime\at{Q}-\si\eta=\calB{Q},
\end{align}
where $Q$ is the \emph{dual profile} to be introduced in \S\ref{sec:prelim.Dual}. Moreover,
$\calB$ is a compact and symmetric integral operator, and $\Psi$ is the \emph{dual
potential}, i.e., the Legendre transform of $\Phi$. This formulation allows to construct
wavetrains as solutions to a constrained optimization problem, where $\si$ and $-\si\eta$
play the role of Lagrangian multipliers.
\par
The existence proof for wavetrains given below is based on a combination of variational
arguments (direct method) and dynamical concepts (invariant sets of flows). These ideas can
also be applied to other Hamiltonian lattices and different types of coherent
structures\fncite{Her09,HR10b,Her10c}, but discrete scalar conservation laws are special
since they are first order in time and real-valued. All other Hamiltonian lattices we are
aware of are either second order in time or vector-valued, and allow for a simpler
variational setting for travelling waves.
\par
Our main result guarantees the existence of a three-parameter family of wavetrains and can be
summarized as follows.
\par\noindent%
\begin{minipage}{\textwidth}
\bigskip {\bf Main result. }
\emph{Suppose that $\Phi$ is strictly convex and satisfies some regularity
\mbox{assumptions}. Then, for each $k\in\oointerval{0}{\pi}$ there exists a two-parameter
family of $2\pi$-periodic wavetrains with $v\in\Rset$ and frequency $\om>0$ such that the
profile $V$ is even and unimodal on $\ccinterval{-\pi}{\pi}$.}
\medskip%
\end{minipage}
\begin{figure}[t!]
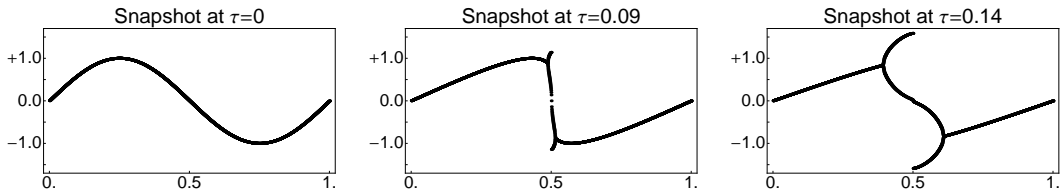
%
\centering{%
\renewcommand{\arraystretch}{\figstretch}%
\begin{tabular}{lcr}%
\begin{minipage}{\figwidth}%
\includegraphics[width=\textwidth, draft=\figdraft]%
{\figfile{ds_2_11}}%
\end{minipage}%
&%
\begin{minipage}{\figwidth}%
\includegraphics[width=\textwidth, draft=\figdraft]%
{\figfile{ds_2_12}}%
\end{minipage}%
&%
\begin{minipage}{\figwidth}%
\includegraphics[width=\textwidth, draft=\figdraft]%
{\figfile{ds_2_13}}%
\end{minipage}%
\end{tabular}%
}
\caption{%
Numerical lattice solution for $N=4000$, $\Phi^\prime\at{u}=u^2$ and
$\bar{u}_\ini\at\xi=\sin\at{2\pi\xi}$. \emph{Interpretation:} If $\Phi^{\prime\prime}$
changes sign, the lattices can generate modulated binary oscillation instead of dispersive
shocks.
}%
\label{Fig:Intro.3}
\end{figure}%
\par
The assumptions on $\Phi$ will be specified in Assumption \ref{Main.Ass} and the precise
existence result is formulated in Theorem \ref{Theo:Main.Result}. Here we proceed with some
comments concerning the choice of $k$ and the convexity of $\Phi$.
\begin{enumerate}
\item
Since \eqref{TWAnsatz.U} is invariant under
$\pair{k}{\om}\rightsquigarrow\pair{-k}{-\om}$, there is a similar result for
$k\in\oointerval{-\pi}{0}$ with $\om<0$. The cases $k=0$ and $k=\pm\pi$, however, are
degenerate, see \S\ref{sec:prelim.Elem}.
\item
As time reversal in \eqref{Intro.Lattice} corresponds to
$\pair{k}{\om}\rightsquigarrow\pair{k}{-\om}$, the existence result can easily be adapted
to the case of strictly concave $\Phi$.
\item
Our proof does not cover potentials $\Phi$ that change from convex to concave or vice
versa, and it is not obvious what happens near zeros of $\Phi^{\prime\prime}$. Numerical
simulations as presented in Figure~\ref{Fig:Intro.3}, however, indicate that then a
three-parameter families of wavetrains might not exist anymore.
\end{enumerate}
The paper is organized as follows. In \S\ref{sec:prelim} we summarize some elementary
properties of wavetrains. We also derive the integral equation for the dual profile and
discuss some normalization which allows to simplify the presentation. \S\ref{sec:proof} is
devoted to the proof of the existence result. To point out the key idea we first state an
abstract result in Theorem \ref{Theo.Existence.Abstract}, and show afterwards that the
assertions we made are satisfied for wavetrains, see Theorem \ref{Theo:Main.Result}. Finally,
in \S\ref{sec:num} we compute wavetrains by means of a discrete gradient flow on a constraint
manifold.
%
%-----------------------------------------------------------------------------------------------
\section{Preliminaries about discrete conservation laws}\label{sec:prelim}
%-----------------------------------------------------------------------------------------------
%
In this section we summarize some elementary properties of wavetrains, derive the integral
equation \eqref{TWEquation.Abstract} for the dual profile, and introduce some normalization.
%
%
%
%-----------------------------------------------------------------------------------------------
\subsection{Elementary properties and special solutions}\label{sec:prelim.Elem}
%-----------------------------------------------------------------------------------------------
%
%
First we observe that the wavetrain equation \eqref{TWEquation.Version1} is invariant under
shifts $\phase\rightsquigarrow\phase_0$, reflections $\phase\rightsquigarrow-\phase$, and
scalings
\begin{align*}
k\rightsquigarrow{k/s}
,\qquad
\om\rightsquigarrow\om/s
,\qquad
V\at\phase\rightsquigarrow{V\at{s\phase}}
,\qquad
v\rightsquigarrow{v}
\end{align*}
with $s\neq{0}$. It is therefore sufficient to consider a fixed periodicity cell $\La$ and
wave numbers $k\in\La$, so from now on we assume
\begin{align*}
\La=\ccinterval{-\pi}{\pi}.
\end{align*}
Recall that a periodic function $V:\La\to\Rset$ is \emph{even} if $V\at\phase=V\at{-\phase}$
holds for all $\phase\in\La$, and that an even function on $\La$ is \emph{unimodal} if it is
monotone on $\ccinterval{-\pi}{0}$.
\par
Second, we notice that \eqref{TWEquation.Version1} degenerates for both $k=0$ and $k=\pm\pi$.
In fact, wavetrains for $k=0$ are globally constant with $u_j\at{t}\equiv{u_1}\at{0}$, while
wavetrains for $k=\pi$ are stationary binary oscillations with $u_{2j}\at{t}={u_0}\at{0}$ and
$u_{2j+1}\at{t}={u_1}\at{0}$. Therefore, and since \eqref{TWEquation.Version1} is invariant
under $\pair{k}{\om}\rightsquigarrow\pair{-k}{-\om}$, we restrict all subsequent
considerations to $k\in\oointerval{0}{\pi}$.
\bigpar
\begin{figure}[t!]
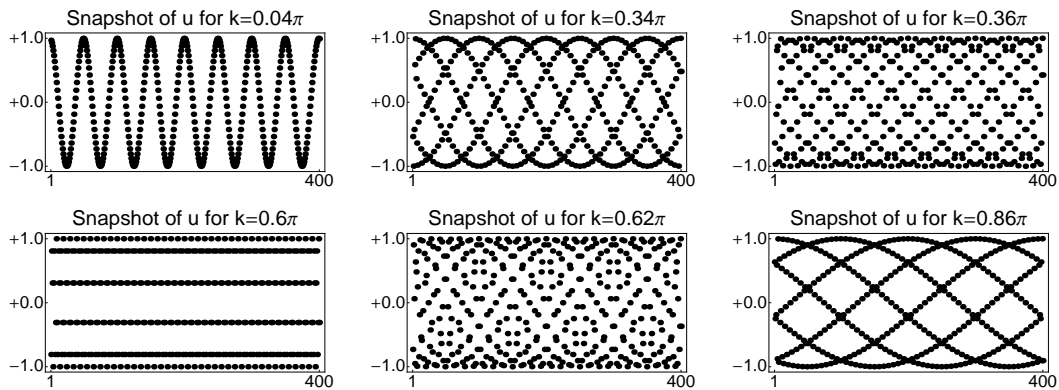
%
\centering{%
\renewcommand{\arraystretch}{\figstretch}%
\begin{tabular}{lcr}%
\begin{minipage}{\figwidth}%
\includegraphics[width=\textwidth, draft=\figdraft]%
{\figfile{ex_0_force_1}}%
\end{minipage}%
&%
\begin{minipage}{\figwidth}%
\includegraphics[width=\textwidth, draft=\figdraft]%
{\figfile{ex_0_force_2}}%
\end{minipage}%
&%
\begin{minipage}{\figwidth}%
\includegraphics[width=\textwidth, draft=\figdraft]%
{\figfile{ex_0_force_3}}%
\end{minipage}%
\\%
\begin{minipage}{\figwidth}%
\includegraphics[width=\textwidth, draft=\figdraft]%
{\figfile{ex_0_force_4}}%
\end{minipage}%
&%
\begin{minipage}{\figwidth}%
\includegraphics[width=\textwidth, draft=\figdraft]%
{\figfile{ex_0_force_5}}%
\end{minipage}%
&%
\begin{minipage}{\figwidth}%
\includegraphics[width=\textwidth, draft=\figdraft]%
{\figfile{ex_0_force_6}}%
\end{minipage}%
\end{tabular}%
}%
\caption{%
Snapshots of harmonic wavetrains for several values of $k$. Although the carrier profile is
always $U\at\phase=\cos\phase$, the plots $u_j=U\at{kj}$ against $j\in\Zset$ exhibit rather
different spatial patterns.
}%
\label{Fig:Harm}
\end{figure}%
For linear flux $\Phi^\prime$ we can solve \eqref{TWEquation.Version1} by Fourier transform.
Specifically, linearizing \eqref{TWEquation.Version1} around $v$ and restricting to unimodal
and even profiles we find that $\om$ depends on $k$ and $v$ via the dispersion relation
\begin{align*}
\om=\Omega\pair{k}{v}=\Phi^{\prime\prime}\at{v}\sin{k},
\end{align*}
and that the unique profile is $V\at\phase=\alpha\cos{\phase}$, where the amplitude
$\alpha\in\Rset$ is the third independent parameter. Harmonic wavetrains furthermore
exemplify that simple profiles can generate rather complex patterns for the spatial
oscillations in the lattice. This is illustrated in Figure~\ref{Fig:Harm}, and in
Figure~\ref{Fig:Snapshots} for the nonlinear case.
\par
The second case in which we can solve \eqref{TWEquation.Version1} explicitly are wavetrains
with wave number $k=\pi/2$.
\begin{lemma}
\label{Lem:ExplicitODEsolution}%
Each wavetrain for $k=\pi/2$ satisfies the Hamiltonian ODE
\begin{align}
\label{Lem:ExplicitODEsolution.Eqn1}%
\om\tfrac{\dint}{\dint\phase}V\at\phase=
+{\Phi}_{v,\,\sym}^\prime\bat{\tilde{V}\at{\phase}}
,\quad
\om\tfrac{\dint}{\dint\phase}\tilde{V}\at\phase=-
{\Phi}_{v,\,\sym}^\prime\bat{V\at{\phase}},
\end{align}
where
\begin{align*}
\tilde{V}\at\phase=V\at{\phase+\pi/2},
\quad%
{\Phi}_{v,\,\sym}\at{x}=\tfrac{1}{2}\bat{\Phi\at{v+x}+\Phi\at{v-x}}.
\end{align*}
In particular, if $\Phi$ is smooth and convex, then for each $v$ there exists a unique
one-parameter family of wavetrains with unimodal and even profiles that is parametrized by
$\varrho=\Phi_{v,\,\sym}\at{V\at{0}}$.
\end{lemma}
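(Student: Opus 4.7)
The plan is to exploit the special structure at $k=\pi/2$ to uncover a hidden $\pi$-antiperiodicity of the profile, which collapses the non-local advance-delay equation \eqref{TWEquation.Version1} into the planar Hamiltonian system \eqref{Lem:ExplicitODEsolution.Eqn1}. First I would rewrite \eqref{TWEquation.Version1} at $k=\pi/2$ as
\begin{align*}
\om V^\prime\at\phase=\tfrac12\bat{\Phi^\prime\at{v+V\at{\phase+\pi/2}}-\Phi^\prime\at{v+V\at{\phase-\pi/2}}},
\end{align*}
and consider the auxiliary function $W\at\phase:=V\at\phase+V\at{\phase+\pi}$. Adding the two equations obtained by evaluating at $\phase$ and at $\phase+\pi$, the right-hand sides cancel by virtue of the $2\pi$-periodicity identity $V\at{\phase+3\pi/2}=V\at{\phase-\pi/2}$, so $W^\prime\equiv 0$. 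Since $V$ is the zero-average part of the wavetrain, $W$ has vanishing mean as well, hence $W\equiv 0$ and
\begin{align*}
V\at{\phase+\pi}=-V\at\phase.
\end{align*}

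With this antiperiodicity in hand, $V\at{\phase-\pi/2}=-V\at{\phase+\pi/2}=-\tilde V\at\phase$, so the wavetrain equation simplifies to $\om V^\prime=\tfrac12\bat{\Phi^\prime\at{v+\tilde V}-\Phi^\prime\at{v-\tilde V}}=\Phi^\prime_{v,\sym}\at{\tilde V}$. Evaluating \eqref{TWEquation.Version1} at $\phase+\pi/2$ and again exploiting $V\at{\phase+\pi}=-V\at\phase$ yields the companion identity $\om\tilde V^\prime=-\Phi^\prime_{v,\sym}\at V$. Together these give exactly \eqref{Lem:ExplicitODEsolution.Eqn1}, which is a planar Hamiltonian system with Hamiltonian
\begin{align*}
H\at{V,\tilde V}=\Phi_{v,\sym}\at V+\Phi_{v,\sym}\at{\tilde V}.
\end{align*}

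For the second assertion I would argue in the phase plane. Strict convexity of $\Phi$ implies that $\Phi_{v,\sym}$ is strictly convex and even, with unique minimum $\Phi\at v$ at the origin, so $H$ is coercive and its level sets $\{H=\Phi\at v+\varrho\}$ are smooth simple closed curves for every $\varrho>0$. Evenness of $V$ forces $V^\prime\at 0=0$, hence by the first equation $\tilde V\at 0=0$; unimodality fixes the sign so that $V\at 0$ lies on the positive $V$-axis. The initial datum $\pair{V\at 0}{0}$ then selects a unique periodic trajectory on the level $\varrho=\Phi_{v,\sym}\at{V\at 0}$, and the four-fold symmetry $\pair{V}{\tilde V}\mapsto\pair{\tilde V}{-V}$ of the vector field makes this trajectory automatically compatible with the defining relation $\tilde V\at\phase=V\at{\phase+\pi/2}$. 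Finally, the frequency $\om$ is pinned down by demanding that the intrinsic period of the rescaled autonomous system equal $2\pi$, which yields a unique $\om$ as a function of $\varrho$.

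The only non-routine step is the derivation of the hidden antiperiodicity $V\at{\phase+\pi}=-V\at\phase$ in the first paragraph; once this is available, the original non-local wavetrain equation collapses to a standard planar Hamiltonian ODE and the remainder of the proof is classical phase-plane analysis.
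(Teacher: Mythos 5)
Your proof is correct and follows essentially the same route as the paper: the hidden antiperiodicity $V\at{\phase+\pi}=-V\at{\phase}$ is derived exactly as in the paper's proof (there phrased as $\om\tfrac{\dint}{\dint\phase}V\at\phase=-\om\tfrac{\dint}{\dint\phase}V\at{\phase+\pi}$ combined with $\int_\La V\dint\phase=0$), after which the reduction to the planar Hamiltonian system and the phase-plane discussion simply spell out what the paper compresses into ``standard arguments.'' The only cosmetic slip is the normalization of the parameter: since $\varrho=\Phi_{v,\,\sym}\at{V\at{0}}\geq\Phi_{v,\,\sym}\at{0}=\Phi\at{v}$, the nondegenerate level sets correspond to $\varrho>\Phi\at{v}$ rather than $\varrho>0$.
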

\begin{proof}
By a direct computation we infer from \eqref{TWEquation.Version1} that
\begin{align*}
\om\tfrac{\dint}{\dint{\phase}}V\at{\phase}
=%
\tfrac{1}{2}\Phi^\prime\bat{v+\tilde{V}\at{\phase}}-
\tfrac{1}{2}\Phi^\prime\bat{v+\tilde{V}\at{\phase-\pi}}=
-\om\tfrac{\dint}{\dint{\phase}}V\at{\phase+\pi}.
\end{align*}
Combining this with $\int_\La{V}\dint\phase=0$ we find
\begin{align*}
V\at{\phase+\pi}=-V\at{\phase}\qquad\forall\;\phase\in\Rset,
\end{align*}
and hence \eqref{Lem:ExplicitODEsolution.Eqn1}. Finally, since
\eqref{Lem:ExplicitODEsolution.Eqn1} is a planar Hamiltonian ODE the remaining assertions
follow from standard arguments.
\end{proof}
%
%-----------------------------------------------------------------------------------------------
\subsection{Integral equation for the dual profile and normalization}\label{sec:prelim.Dual}
%-----------------------------------------------------------------------------------------------
%
%
We define the \emph{dual profile} $Q$ of a wavetrain by
\begin{align}
\label{DualProblem.Trafo}
\Phi^\prime\at{v+V}=q+Q,
\qquad%
q=\fint_\La{\Phi^\prime\at{v+V}}\,\dint\phase,
\end{align}
where $\fint$ abbreviates the integral mean value, i.e.,
\begin{align*}
\fint_\La{V}\,\dint{\phase}=\tfrac{1}{2\pi}\int_{-\pi}^\pi{V}\at\phase\,\dint\phase.
\end{align*}
The key ingredient to our variational existence proof is to restate
\eqref{TWEquation.Version1} as an equation for $Q$. To this end we consider the Legendre
transform $\Psi$ of $\Phi$, which is well-defined and strictly convex provided that $\Phi$ is
strictly convex\fncite{DiB02}, and satisfies
\begin{align*}
\Psi\at{\zeta}=\sup_\nu
\bat{\zeta\nu-\Phi\at{\nu}},
\qquad\Phi^\prime\circ\Psi^\prime=\Psi^\prime\circ\Phi^\prime=\mathrm { id } .
\end{align*}
Using $\Psi$ and \eqref{DualProblem.Trafo} we now find that \eqref{TWEquation.Version1} is
equivalent to
\begin{align*}%
\om\tfrac{\dint}{\dint\phase}\Psi^\prime\at{q+Q}=\nabla_k{Q},
\end{align*}%
and integration with respect to $\phase$ yields
\begin{align}%
\label{DualProblem.Formulation}
\om\at{\Psi^\prime\at{q+Q}-v}=\calA_k{Q},\qquad%
v=
\fint_\La{\Psi^\prime\at{q+Q}}\,\dint\phase.
\end{align}%
Here, $v$ appears as a constant of integration and the integral operator $\calA_k$ is defined
by
\begin{align}
\label{Def.Operator.A}
\at{\calA_k{Q}}\at\phase=\frac{1}{2}\int_{\phase-k}^{\phase+k}
Q\at{\tilde\phase}\,\dint\tilde\phase.
\end{align}
With \eqref{DualProblem.Formulation} we have derived the dual formulation of
\eqref{TWEquation.Version1}; the main mathematical difference between both formulations will
be discussed at the end of \S\ref{sec:proof.I}.
\bigpar
For the existence proof in \S\ref{sec:proof} it is convenient to normalize
\eqref{DualProblem.Formulation} in two steps. At first we incorporate the parameter $q$ into
the nonlinearity by considering the normalized dual potential
\begin{align}
\label{Dual.Pot}
\Psi_q\at\zeta=\Psi\at{q+\zeta}-\Psi^\prime\at{q}\zeta-\Psi\at{q},
\end{align}
which satisfies
\begin{align*}
\Psi^{\prime\prime}_q\at\zeta=\Psi^{\prime\prime}\at{q+\zeta},\qquad
\Psi_q^\prime\at{0}=\Psi_q\at{0}=0.
\end{align*}
This transforms \eqref{DualProblem.Formulation} into
\begin{align*}%
\om\at{\Psi_q^\prime\at{Q}-\eta}=\calA_k{Q},\qquad%
\eta=v-\Psi^\prime\at{q}=
\fint_\La{\Psi_q^\prime\at{Q}}\,\dint\phase.
\end{align*}
The second normalization step is motivated by the harmonic case and the observation that the
formula for the \emph{phase speed} $\si$ depends on the value of $k$. In fact, the
linearization of \eqref{DualProblem.Formulation} around $q$ gives the dispersion relation
\begin{align*}
Q\at\phase{}=
\alpha\cos\phase,\quad\om=
\Omega\pair{q}{k}=\frac{\sin\at{k}}{\Psi^{\prime\prime}\at{q}}
,
\qquad{v}=\Psi^\prime\at{q},
\end{align*}
and we conclude that $\si=\om/k$ for $0<k\leq\pi/2$ but $\si=\om/\at{\pi-k}$ for
$\pi/2\leq{k}<\pi$, see Figure \ref{Fig_1}. We also notice that $\calA_k$ has slightly
different properties for $0<k\leq\pi/2$ and $\pi/2\leq{k}<\pi$. In particular, by
\eqref{Def.Operator.A} we have
\begin{align}
\label{Operator.A.Symmetry}
\calA_{\pi-k}Q=-\calA_k\calT{Q}
\end{align}
for all $Q$ with $\fint_\La{Q}\dint\phase=0$, where $\calT$ denotes the shift operator
\begin{align}
\notag%\label{Def.Operator.T}
\calT{Q}=Q\at{\cdot+\pi}.
\end{align}
\begin{figure}[t!]%
\centering{
\includegraphics[width=.5\textwidth]{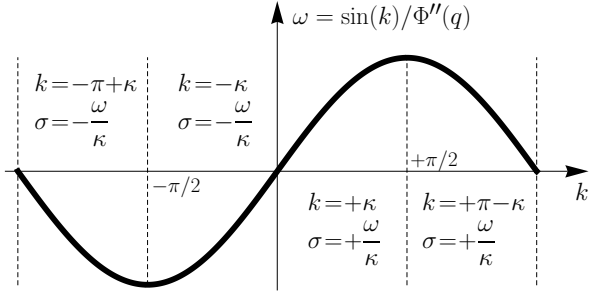}%
}%
\caption{%
Dispersion relation for the harmonic case and the different relations between wave number
$k$, frequency $\omega$, and phase speed $\sigma$. Thanks to the symmetry
$\triple{k}{\om}{\si}\leftrightsquigarrow\triple{-k}{-\om}{\si}$ it is sufficient to consider
$k\in\pair{0}{\pi}$.
}%
\label{Fig_1}%
\end{figure}%
To complete the normalization we now replace $\om$ by $\si$, and $\calA_k$ by either
$\wh{\calB}_k$ or $\wt{\calB}_{\pi-k}$, which are defined for $0<\kappa\leq\pi/2$ by
\begin{align}
\notag%\label{Def.Operators.B}
\wh{\calB}_\kappa=\kappa^{-1}\calA_\kappa,\qquad
\wt{\calB}_\kappa=\kappa^{-1}\calA_{\pi-\kappa}=-\kappa^{-1}\calA_\kappa\calT.
\end{align}
Specifically, for $0<k\leq\pi/2$ we restate the dual wavetrain equation
\eqref{DualProblem.Formulation} as
\begin{align}
\label{Dual.Problem.I}
\si\at{\Psi_q^\prime\at{Q}-\eta}=\wh{\calB}_\kappa{Q}
,\qquad
\kappa=k,\qquad\sigma=\om/k,
\end{align}
whereas for $\pi/2\leq{k}<\pi$ we write
\begin{align}
\label{Dual.Problem.II}
\si\at{\Psi_q^\prime\at{Q}-\eta}=\wt{\calB}_\kappa{Q},\qquad
\kappa=\pi-{k},\qquad\sigma=\om/\at{\pi-k}.
\end{align}
Notice that for $k=\pi/2$ both formulations agree and encode the ODE solution from
Lemma~\ref{Lem:ExplicitODEsolution}. In particular, in this case we have the further symmetry
$Q=-\calT{Q}$.
%
%-----------------------------------------------------------------------------------------------
\section{Variational existence proof for wavetrains}\label{sec:proof}
%-----------------------------------------------------------------------------------------------
%
%
In this section we prove the existence of a three-parameter family of wavetrains by showing
that the dual integral equations \eqref{Dual.Problem.I} and \eqref{Dual.Problem.II} possess
corresponding three-parameter families of solutions. To elucidate the key ideas we start with
an abstract existence result in \S\ref{sec:proof.I}, and check the validity of its
assumptions in \S\ref{sec:proof.II}.
\bigpar
From now on we rely on the following standing assumption.
\begin{assumption}
\label{Main.Ass}
$\Psi$ is twice continuously differentiable and strictly convex with
\begin{align*}
0<\ul{c}\leq\Psi^{\prime\prime}\at\zeta\leq\ol{c}<\infty
\end{align*}
for all $\zeta\in\Rset$ and some constants $\ul{c}$, $\ol{c}$.
\end{assumption}
We remark that Assumption~\eqref{Main.Ass} holds if and only if $\Phi$, the Legendre
transform of $\Psi$, is twice continuously differentiable and strictly convex with
$1/\ol{c}\leq\Phi^{\prime\prime}\leq1/\ul{c}$. This follows from basic properties of the
Legendre transform\fncite{DiB02}.
\bigpar
Throughout the remainder of this paper, $\fspaceL^2$ denotes the usual Hilbert space of
square-integrable and periodic functions on the periodicity cell $\La$, where the dual
pairing and the integral norm are normalized via
\begin{align*}
\skp{Q_1}{Q_2}=\fint_\La{Q_1\,Q_2}\,\dint\phase,
\qquad\norm{Q}_2^2=\fint_\La{Q}^2\,\dint\phase.
\end{align*}
Moreover, we define
\begin{align*}
\calH=\left\{Q\in\fspaceL^2\;:\;\fint_\La {Q}\,\dint\phase=0\right\}.%
\end{align*}
%
%
%----------------------------------------------------------------------------------------------
\subsection{Variational approach and abstract existence result}\label{sec:proof.I}
%-----------------------------------------------------------------------------------------------
%
%
We now prove the existence of a one-parameter family of solutions
$\triple{Q}{\si}{\eta}\in\calH\times\Rset_+\times\Rset$ to the abstract wavetrain equation
\eqref{TWEquation.Abstract}. To this end we assume that the operator $\calB:\calH\to\calH$ is
compact and symmetric, and that $\Psi$ is normalized by
\begin{align}
\label{Normalised.Psi}
\Psi\at{0}=\Psi^\prime\at{0}=0.
\end{align}
Our variational approach to \eqref{TWEquation.Abstract} is based on the functionals
\begin{align}
\label{Def.Operators}
\calF\at{Q}=\tfrac{1}{2}\fint_\La{Q}\,\calB{Q}\dint\phase=\tfrac{1}{2}\skp{Q}{\calB{Q}}
,\qquad
\calW\at{Q}=\fint_\La\Psi\at{Q}\,\dint\phase,
\end{align}
and the constrained optimization problem
\begin{align}
\label{Optimization.Problem}
\text{maximize  }\calF\text{  on  }%
\calN_{\ga}=\big\{Q\in\calH\;:\;\calW\at{Q}\leq\ga\big\},
\end{align}
where $\ga>0$ is a free parameter. We readily justify that \eqref{TWEquation.Abstract} is the
corresponding Euler-Lagrange equation for a maximizer $Q\in\fspaceL^2$, where $\si$ and
$\mu=-\sigma\eta$ are Lagrangian multiplies for the constraints $\calW\at{Q}\leq\ga$ and
$\fint_\La{Q}\dint\phase=0$, respectively.
\par
The existence of maximizers $Q\in\calH$ can be proven by the \emph{direct method} using weak
compactness arguments, see the first part in the proof of
Theorem~\ref{Theo.Existence.Abstract}. However, in order to gain more qualitative information
about shape of the profile function $Q$ we refine the optimization problem
\eqref{Optimization.Problem}. For this purpose we introduce the corresponding (negative)
gradient flow, that is the $\calH$-valued ODE
\begin{align}
\label{Gradient.Flow}
\frac{\dint}{\dint\tau}{Q}&=\calB{Q}-\si\at{Q}\calP\at{Q},
\qquad
\calP\at{Q}=\Psi^\prime\at{Q}-\eta\at{Q}.
\end{align}
Here $\tau>0$ is the flow time, and
\begin{align*}
\eta\at{Q}=\fint_\La\Psi^\prime\at{Q}\dint\phase
,\qquad
\si\at{Q}=\frac{\skp{\calP\at{Q}}{\calB{Q}}}{\norm{\calP\at{Q}}_2^2}
\end{align*}
are two dynamical multipliers which guarantee that $\fint_{\La}Q\,\dint\phase=0$ and
$\calW\at{Q}=\const$ hold along each trajectory of \eqref{Gradient.Flow}. Notice that, by
construction, each stationary point of \eqref{Gradient.Flow} solves
\eqref{TWEquation.Abstract} with $\si=\si\at{Q}$ and $\eta=\eta\at{Q}$, and vice versa. For
later use we also mention that the explicit Euler scheme to \eqref{Gradient.Flow} with time
step $\tau>0$ is given by the mapping
\begin{align}
\label{Gradient.Flow.Discrete}
Q\mapsto\calI_\tau\at{Q}=\at{1-\tau}Q+\tau\calB{Q}-\tau\si\at{Q}\calP\at{Q}.
\end{align}
\bigpar
We are now able to formulate the refined existence result.%
\begin{theorem}
\label{Theo.Existence.Abstract}
Let $\calC\subset\calH$ be any positive cone that
\begin{enumerate}
\item
is convex and closed,
\item
invariant under the gradient flow \eqref{Gradient.Flow},
\item
satisfies $\sup_{Q\in\calC}\calF\at{Q}>0$.
\end{enumerate}
Then, for each $\ga>0$ the functional $\calF$ attains its maximum on $\calC\cap\calN_{\ga}$
and each maximizer $Q$ solves \eqref{TWEquation.Abstract} with multipliers $\eta=\eta\at{Q}$
and $\si=\si\at{Q}$. Moreover, each maximizer $Q$ satisfies $\si\at{Q}>0$ and
$\calW\at{Q}=\ga$.
\end{theorem}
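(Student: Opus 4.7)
The plan is to carry out four steps: existence of a maximizer by the direct method, positivity of the maximum together with activeness of the constraint, derivation of the Euler-Lagrange equation via the gradient flow, and strict positivity of $\si$. For existence, Assumption~\ref{Main.Ass} combined with the normalization \eqref{Normalised.Psi} gives the quadratic bounds $\tfrac{1}{2}\ul{c}\zeta^2\leq\Psi(\zeta)\leq\tfrac{1}{2}\ol{c}\zeta^2$, so $\calN_\ga$ is bounded in $\fspaceL^2$; convexity of $\Psi$ makes it convex and continuity of $\calW$ makes it strongly, hence weakly, closed, as does hypothesis (i) for $\calC$. Therefore $\calC\cap\calN_\ga$ is weakly sequentially compact, compactness of $\calB$ renders $\calF$ weakly continuous, and the supremum is attained at some $Q^\ast$.

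Condition (iii) provides some $Q_0\in\calC$ with $\calF(Q_0)>0$, and the cone property together with $\calW(\la Q_0)\leq\tfrac{1}{2}\ol{c}\la^2\|Q_0\|_2^2$ allows us to rescale to $\la Q_0\in\calC\cap\calN_\ga$ for small $\la>0$, showing that the maximum value is strictly positive. If $\calW(Q^\ast)<\ga$ held, continuity of $\calW$ would yield $(1+\eps)Q^\ast\in\calC\cap\calN_\ga$ for some $\eps>0$; but $\calF((1+\eps)Q^\ast)=(1+\eps)^2\calF(Q^\ast)>\calF(Q^\ast)$ would contradict maximality, so the constraint is active.

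For the Euler-Lagrange equation the crucial device is the gradient flow \eqref{Gradient.Flow}, which is where the cone structure really enters. Since $\calF(Q^\ast)>0$ we have $Q^\ast\neq 0$, and strict convexity of $\Psi$ rules out $\calP(Q^\ast)\equiv 0$ (such vanishing would force $Q^\ast$ constant, hence zero in $\calH$), so $\si(Q^\ast)$ is well-defined and the flow exists locally near $Q^\ast$. A short computation using $\calB:\calH\to\calH$ and the definition of $\si(Q)$ shows that trajectories preserve both $\fint_\La Q=0$ and $\calW(Q)=\ga$, while hypothesis (ii) keeps them inside $\calC$; Cauchy-Schwarz then yields
\begin{equation*}
\tfrac{d}{d\tau}\calF(Q)=\|\calB Q\|_2^2-\frac{\skp{\calB Q}{\calP(Q)}^2}{\|\calP(Q)\|_2^2}\geq 0,
\end{equation*}
with equality iff $\calB Q$ and $\calP(Q)$ are linearly dependent. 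Running the flow from $Q^\ast$ cannot raise $\calF$ past its maximum, so the equality case must hold at $\tau=0$, giving $\calB Q^\ast=\wt\si\,\calP(Q^\ast)$ for some scalar $\wt\si$; pairing with $\calP(Q^\ast)$ identifies $\wt\si=\si(Q^\ast)$, while averaging (using $\fint_\La\calB Q^\ast=0$) identifies $\eta=\eta(Q^\ast)$, recovering \eqref{TWEquation.Abstract}.

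For $\si(Q^\ast)>0$, I pair the Euler-Lagrange equation with $Q^\ast$ itself: since $\fint_\La Q^\ast=0$ the constant $\eta$ drops out and $\skp{Q^\ast}{\calP(Q^\ast)}=\skp{Q^\ast}{\Psi^\prime(Q^\ast)}$. Convexity of $\Psi$ with \eqref{Normalised.Psi} gives the pointwise inequality $\zeta\Psi^\prime(\zeta)\geq\Psi(\zeta)\geq 0$, so $\skp{Q^\ast}{\calP(Q^\ast)}\geq\calW(Q^\ast)=\ga>0$; combined with $\skp{Q^\ast}{\calB Q^\ast}=2\calF(Q^\ast)>0$ this forces $\si(Q^\ast)>0$. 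The main obstacle is the Euler-Lagrange step: because $\calC$ is only a cone, admissible variations at $Q^\ast$ are essentially one-sided and classical Lagrange multiplier theorems do not apply; the gradient-flow invariance in hypothesis (ii) supplies precisely the directions needed to force stationarity, and the key technical check is that the dynamical multipliers $\si(Q)$ and $\eta(Q)$ in \eqref{Gradient.Flow} really do conserve $\fint_\La Q$ and $\calW(Q)$ along the flow.
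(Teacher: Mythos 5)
Your proposal is correct and follows essentially the same route as the paper: direct method on the weakly compact set $\calC\cap\calN_{\ga}$, scaling along the cone to show the maximum is positive and the constraint $\calW\at{Q}=\ga$ is active, stationarity under the $\calW$-preserving and $\calF$-increasing gradient flow (via the Cauchy--Schwarz equality case) to obtain \eqref{TWEquation.Abstract}, and testing with $Q$ to get $\si\at{Q}>0$. The only cosmetic difference is that the paper packages the scaling step through the star-shape function $\la_\ga$ and isolates the monotonicity computation for the flow in a separate lemma.
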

In preparation for the proof of Theorem \ref{Theo.Existence.Abstract} we draw the following
conclusions from Assumption~\ref{Main.Ass}, the normalization condition
\eqref{Normalised.Psi}, and the postulated properties of $\calB$.
\begin{lemma}
\label{Lem:General.Props}
The following assertions are satisfied.
\begin{enumerate}
\item
$\calF:\fspaceL^2\to\Rset$ is well defined and weakly continuous,
\item
$\calW:\fspaceL^2\to\Rset$ is well defined, convex, continuous, and
G\^{a}teaux-differentiable with derivative $\partial\calW\at{Q}=\Psi^\prime\at{Q}$. We also
have
\begin{align}
\label{Lem:General.Props.Eqn1}
\tfrac{1}{2}\ul{c}\norm{Q}_2^2\leq\calW\at{Q}\leq\tfrac{1}{2}\ol{c}\norm{Q}_2^2
,\qquad%
\skp{\Psi^\prime\at{Q}}{Q}>0
\end{align}
for all $0\neq{Q}\in\fspaceL^2$.
\item
For each $\ga>0$, the set $\calN_{\ga}\subset\calH$ is convex and weakly compact in
$\fspaceL^2$. It is also star-shaped in the sense that for each $0\neq{Q}\in\calH$ there
exists a unique ${\la}_\ga\at{Q}>0$ such that $\la{Q}\in\calN_\ga$ for all
$0\leq\la\leq\la_\ga\at{Q}$.
\end{enumerate}
\end{lemma}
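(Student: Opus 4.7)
The three assertions collect analytic properties needed in the sequel and are all standard Hilbert-space facts, so my plan is to dispatch each by a textbook argument; the only step I anticipate needing genuine care is the G\^ateaux-differentiability of $\calW$, where one must justify exchanging a limit with an integral.

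For (i), I would write $\calF$ as the quadratic form $\tfrac{1}{2}\skp{Q}{\calB{Q}}$ of a bounded symmetric operator; well-definedness and norm-continuity on $\fspaceL^2$ are then immediate from Cauchy-Schwarz and the operator bound $\norm{\calB}<\infty$. Weak continuity is the standard reasoning for quadratic forms with compact kernel: along a weakly convergent sequence $Q_n\rightharpoonup Q$, compactness of $\calB$ gives $\calB{Q_n}\to\calB{Q}$ strongly, while $\sup_n\norm{Q_n}_2<\infty$ (Banach-Steinhaus) lets one pass to the limit in the splitting $\skp{Q_n}{\calB Q_n}-\skp{Q}{\calB Q}=\skp{Q_n-Q}{\calB{Q}}+\skp{Q_n}{\calB Q_n-\calB{Q}}$.

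For (ii), the two bounds in \eqref{Lem:General.Props.Eqn1} I would derive pointwise by a second-order Taylor expansion at $\zeta=0$, using the normalization $\Psi\at{0}=\Psi^\prime\at{0}=0$ together with $\ul{c}\leq\Psi^{\prime\prime}\leq\ol{c}$ from Assumption~\ref{Main.Ass}, and then integrate. Convexity and continuity of $\calW$ inherit from the corresponding pointwise properties of $\Psi$. The G\^ateaux derivative is the main technical step: I would combine the pointwise identity
$$
\frac{\Psi\at{Q+\eps H}-\Psi\at{Q}}{\eps}\;\underset{\eps\to 0}{\longrightarrow}\;\Psi^\prime\at{Q}H
$$
with the uniform Lipschitz bound $\abs{\Psi^\prime\at{a}-\Psi^\prime\at{b}}\leq\ol{c}\abs{a-b}$ to produce an integrable majorant $\ol{c}\at{\abs{Q}+\abs{H}}\abs{H}$, and then apply Lebesgue's dominated convergence. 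Finally, strict convexity of $\Psi$ together with $\Psi^\prime\at{0}=0$ forces $\zeta\Psi^\prime\at\zeta>0$ whenever $\zeta\neq 0$, which integrates to $\skp{\Psi^\prime\at{Q}}{Q}>0$ for $0\neq Q\in\fspaceL^2$.

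For (iii), convexity of $\calN_\ga$ is inherited from convexity of $\calW$. Boundedness in $\fspaceL^2$ is immediate from the lower bound in \eqref{Lem:General.Props.Eqn1}, while weak closedness follows from Mazur's lemma applied to the strongly closed, convex sublevel set of the continuous functional $\calW$; together these yield weak compactness in the Hilbert space $\fspaceL^2$. For the star-shapedness I would fix $0\neq Q\in\calH$ and analyse the scalar map $f_Q\at\la=\calW\at{\la Q}$: termwise differentiation under the integral sign (justified exactly as for the G\^ateaux derivative above) gives $f_Q^{\prime\prime}\at\la=\fint_\La\Psi^{\prime\prime}\at{\la Q}Q^2\dint\phase\geq\ul{c}\norm{Q}_2^2>0$, so $f_Q$ is smooth and strictly convex on $\ccinterval{0}{\infty}$, vanishes to second order at $\la=0$, and diverges as $\la\to\infty$; this produces a unique crossing $\la_\ga\at{Q}>0$ with $f_Q\at{\la_\ga\at{Q}}=\ga$, and monotonicity of $f_Q$ on $\ccinterval{0}{\la_\ga\at{Q}}$ yields $\la Q\in\calN_\ga$ for every $\la$ in this range.
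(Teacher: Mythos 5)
Your proposal is correct; the paper itself dismisses this lemma with the single line that ``all assertions follow immediately from the definitions,'' and your write-up supplies exactly the standard details that are being taken for granted (compactness of $\calB$ for weak continuity, Taylor expansion with $\ul{c}\leq\Psi^{\prime\prime}\leq\ol{c}$ for the bounds, dominated convergence for the G\^ateaux derivative, and Mazur plus reflexivity for weak compactness). No gaps.
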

\begin{proof}
All assertions follow immediately from the definitions of $\calF$, $\calW$, and $\calN_\ga$,
see \eqref{Def.Operators} and \eqref{Optimization.Problem}.
\end{proof}
\begin{lemma}
\label{Lem:Gradient.Flow.Props}
The gradient flow \eqref{Gradient.Flow} is well defined on $\calH\setminus\{0\}$ and
conserves $\calW$. Moreover, $\calF$ increases strictly on each non-stationary trajectory.
\end{lemma}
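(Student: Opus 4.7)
The plan is to handle the three claims in turn, with the pivotal observation being that $\norm{\calP\at{Q}}_2$ is strictly positive on $\calH\setminus\{0\}$, so that $\si\at{Q}$ is well defined.

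For well-posedness, I would first check that $\calP\at{Q}=0$ forces $Q=0$: since $\calP\at{Q}=\Psi^\prime\at{Q}-\eta\at{Q}$, the vanishing of $\calP\at{Q}$ means that $\Psi^\prime\at{Q}$ is constant in $\phase$, and strict monotonicity of $\Psi^\prime$ (Assumption~\ref{Main.Ass}) then forces $Q$ to be constant; since $Q\in\calH$ has mean zero this gives $Q=0$. Because $\Psi^\prime$ is $C^1$ and $\calB$ is bounded, the right-hand side of \eqref{Gradient.Flow} is therefore locally Lipschitz on $\calH\setminus\{0\}$, yielding a unique local trajectory by standard Hilbert-space ODE theory. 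Forward-invariance of $\calH$ is immediate because $\calB$ maps $\calH$ into itself and $\calP\at{Q}$ has zero mean by construction; the coercivity bound in \eqref{Lem:General.Props.Eqn1}, combined with the conservation of $\calW$ established below, then prevents blow-up and keeps the trajectory away from the origin, giving global-in-time existence.

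For the two invariants I would differentiate along a trajectory. Using $\partial\calW\at{Q}=\Psi^\prime\at{Q}$ from Lemma~\ref{Lem:General.Props}, the mean-zero property $\dot{Q}\in\calH$, and the splitting $\Psi^\prime\at{Q}=\calP\at{Q}+\eta\at{Q}$, one obtains
\begin{align*}
\tfrac{\dint}{\dint\tau}\calW\at{Q}=\skp{\Psi^\prime\at{Q}}{\dot{Q}}=\skp{\calP\at{Q}}{\calB Q}-\si\at{Q}\norm{\calP\at{Q}}_2^2=0,
\end{align*}
which is precisely the identity built into the definition of $\si\at{Q}$. For $\calF$, the symmetry of $\calB$ gives $\tfrac{\dint}{\dint\tau}\calF\at{Q}=\skp{\calB Q}{\dot{Q}}$, and substituting the flow yields
\begin{align*}
\tfrac{\dint}{\dint\tau}\calF\at{Q}=\norm{\calB Q}_2^2-\frac{\skp{\calP\at{Q}}{\calB Q}^2}{\norm{\calP\at{Q}}_2^2}\geq 0
\end{align*}
by Cauchy--Schwarz. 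Equality holds precisely when $\calB Q$ and $\calP\at{Q}$ are proportional; the proportionality constant must then equal $\si\at{Q}$, which by \eqref{Gradient.Flow} forces $\dot{Q}=0$. Hence $\calF$ is strictly increasing along every non-stationary trajectory.

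The only non-routine step is the non-vanishing of $\calP\at{Q}$ on $\calH\setminus\{0\}$, and its proof requires nothing beyond strict convexity of $\Psi$; the remainder is standard gradient-flow bookkeeping that exploits the symmetry of $\calB$, the G\^ateaux differential of $\calW$, and the projection onto $\calH$ built into the definition of $\si\at{Q}$.
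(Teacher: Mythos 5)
Your proposal is correct and follows essentially the same route as the paper: non-vanishing of $\calP\at{Q}$ on $\calH\setminus\{0\}$ via strict convexity, local Lipschitz continuity for well-posedness, and differentiation of $\calW$ and $\calF$ along trajectories with Cauchy--Schwarz giving the dissipation identity and the collinearity characterization of stationary points. The only addition is your explicit global-existence remark (trajectories confined to an annulus by conservation of $\calW$ and the bounds \eqref{Lem:General.Props.Eqn1}), which the paper leaves implicit but which is a correct and harmless refinement.
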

\begin{proof}
Suppose that $Q$ is given with $\calP\at{Q}=0$, that means $\Psi^\prime\at{Q}=\const$. The
strict convexity of $\Psi$ implies $Q=\const$ and $Q\in\calH$ gives $Q=0$. From this we infer
that $\si\at{Q}$ is well defined for $Q\neq{0}$. Consequently, and since the right hand side
in \eqref{Gradient.Flow} is locally Lipschitz in $Q$, the ODE \eqref{Gradient.Flow} is well
posed in $\calH\setminus\{0\}$. By straight forward computations we now verify
$\tfrac{\dint}{\dint\tau}\calW\at{Q}=0$ and
\begin{align*}
\tfrac{\dint}{\dint\tau}\calF\at{Q}
=%
\skp{\calB{Q}-\si\at{Q}\calP\at{Q}}{\calB{Q}}
=
\norm{\calB{Q}}_2^2-\frac{\skp{\calP{Q}}{\calB{Q}}^2}{\norm{\calP\at{Q}}_2^2}
\geq0.
\end{align*}
In particular, we have $\tfrac{\dint}{\dint\tau}\calF\at{Q}=0$ if and only if $\calB{Q}$ and
$\calP\at{Q}$ are collinear, i.e., if and only if $Q$ is a stationary point of
\eqref{Gradient.Flow}.
\end{proof}
We now finish the proof of the abstract existence result.
\begin{Proof}[ of Theorem \ref{Theo.Existence.Abstract}]
With respect to the weak topology in $\fspaceL^2$, the objective functional $\calF$ is
continuous and the constraint set $\calC\cap\calN_{\ga}$ is compact. The existence of a
maximizer $Q$ thus follows from basic topological principles, and by assumption we have
$\calF\at{Q}>0$. By Lemma~\ref{Lem:General.Props} we find
\begin{align*}
\la_\ga\bat{Q}^2\calF\bat{Q}=\calF\bat{\la_\ga\nat{Q}Q}
\leq\calF\at{Q},
\end{align*}
which implies $\la_\ga\bat{Q}\leq1$. On the other hand, $Q\in\calN_\ga$ yields
$\la_\ga\bat{Q}\geq1$, and thus we find $\la_\ga\bat{Q}=1$, which means
$Q\in\partial\calN_\ga$. Moreover, Lemma~\ref{Lem:Gradient.Flow.Props} ensures that $Q$ is
stationary under the gradient flow \eqref{Gradient.Flow}, and hence a solution to
\eqref{TWEquation.Abstract}. Finally, testing \eqref{TWEquation.Abstract} with $Q$ we find
\begin{align*}
2\calF\at{Q}=\si\skp{\Psi^\prime\at{Q}}{Q},
\end{align*}
and $\calF\at{Q}>0$ combined with \eqref{Lem:General.Props.Eqn1} implies $\si>0$.
\end{Proof}
To conclude this section we mention that there also exist variational characterizations of
solutions $V$ to \eqref{TWEquation.Version1}, but these are less feasible than the integral
equation for the dual profile. The analogue to \eqref{TWEquation.Abstract} is
\begin{align*}
\si\calB^{-1}V=\Phi^\prime\at{v+V}-\theta,\qquad
\theta=\fint\Phi^\prime\at{v+V}\dint{\phase},
\end{align*}
which has a variational structure, but a direct treatment  is difficult as $\calB^{-1}$ is
not continuous. One can get rid off $\calB^{-1}$ by making the ansatz $V=\calB{S}$ with
$S\in\calH$ and working with the functionals
\begin{align*}
\calX\at{S}=\tfrac{1}{2}\fint_\La{S}\calB{S}\,\dint\phase,\qquad
\calY\at{S}=\fint_\La\Phi\at{v+\calB{S}}\,\dint\phase.
\end{align*}
However, the level sets of neither $\calX$ nor $\calY$ are weakly compact, and therefore it
is not obvious how to set up a variational framework that allows to prove the existence of
solutions. Finally, one might think about solving the equation
$V=\calB\at{\Phi^\prime\at{v+V}-\theta}/\si$ by fixed point arguments, but then one has find
a way to exclude the trivial solution $V\equiv{0}$.
%
%
%----------------------------------------------------------------------------------------------
\subsection{Proof of the main result}\label{sec:proof.II}
%----------------------------------------------------------------------------------------------
%
%
Here we apply the abstract result from the previous section and prove the existence of a
three-parameter family of wavetrains as claimed in the introduction. We therefore define
$\calC$ to be the positive cone of all $\fspaceL^2$-functions on $\La$ that are even,
unimodal, and have zero average. This reads
\begin{align}
\label{Definition.C.unimodal}
\calC = \Big\{Q\in\calH\;:\;
\text{$Q\at{-\phase_2}=Q\at{\phase_2}\leq{Q}\at{\phase_1}$ for almost all
$0\leq\phase_1\leq\phase_2\leq\pi$}\Big\}.
\end{align}
We next summarize some properties of the averaging operator $\calA_k$.
\begin{lemma}
\label{Lem.Operator.A.Props}
For all $0<k<\pi$ the operator $\calA_k$ maps $\calH$ into itself and is symmetric and
compact. Moreover, it maps $\calC$ into $\calC$.
\end{lemma}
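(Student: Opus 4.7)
The plan is to treat the four assertions separately. Throughout I would use the identity
\begin{align*}
\calA_k Q\at\phase=\tfrac{1}{2}\int_{-k}^{k}Q\at{\phase+s}\,\dint{s},
\end{align*}
and exploit that $2\pi$-periodicity makes translation an isometry of $\fspaceL^2\at\La$.

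For the zero-mean property, Fubini gives $\fint_\La\calA_k Q\,\dint\phase=\tfrac{1}{2}\int_{-k}^{k}\fint_\La Q\at{\phase+s}\,\dint\phase\,\dint{s}$, and the inner mean equals $\fint_\La Q\,\dint\phase=0$ by translation invariance. Symmetry will follow by the same swapping together with the substitution $s\mapsto -s$. For compactness I would expand into Fourier series on $\calH$: one checks that $\calA_k$ is diagonal in the basis $\{\mhexp{\iu n\phase}\}_{n\neq 0}$ with eigenvalues $\sin\at{nk}/n\to 0$, hence $\calA_k$ is a norm-limit of finite-rank operators. Alternatively, I would note that the distributional derivative of $\calA_k Q$ is $\tfrac{1}{2}\bat{Q\at{\cdot+k}-Q\at{\cdot-k}}\in\fspaceL^2$ and invoke the compact Sobolev embedding $\fspaceW^{1,2}\at\La\hookrightarrow\fspaceL^2\at\La$.

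The main obstacle is the last claim, preservation of $\calC$. Evenness will transfer from $Q$ to $\calA_k Q$ via the change of variable $u\mapsto -u$ in the defining integral. For unimodality, I would fix $0\leq\phase_1\leq\phase_2\leq\pi$ and write
\begin{align*}
\calA_k Q\at{\phase_1}-\calA_k Q\at{\phase_2}=\tfrac{1}{2}\int_0^{\phase_2-\phase_1}
\bigl[Q\at{\phase_1-k+t}-Q\at{\phase_1+k+t}\bigr]\,\dint{t},
\end{align*}
so it will suffice to prove $Q\at{a-k}\geq Q\at{a+k}$ pointwise in $a=\phase_1+t\in\ccinterval{0}{\pi}$. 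Here the interplay of periodicity and unimodality must be handled with care: introducing the circular distance $d\at\phase=\min_{n\in\Zset}\abs{\phase-2\pi n}\in\ccinterval{0}{\pi}$, the assumption on $Q$ translates into $Q\at{\phase}=\widehat Q\at{d\at\phase}$ with $\widehat Q$ nonincreasing, so the inequality reduces to $d\at{a-k}\leq d\at{a+k}$. I would finish by a short case analysis: when both $a\pm k$ lie in $\ccinterval{-\pi}{\pi}$ the estimate $\abs{a-k}\leq a+k$ is immediate from $a,k\geq 0$; when $a+k>\pi$ one reduces modulo $2\pi$, and each resulting subcase collapses to one of $a\leq\pi$ or $k\leq\pi$. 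The case $a-k<-\pi$ cannot occur since $k<\pi$ and $a\geq 0$.
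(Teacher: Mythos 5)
Your proof is correct. The core of the argument --- reducing unimodality of $\calA_k Q$ to the pointwise inequality $Q\at{a-k}\geq Q\at{a+k}$ for $a\in\ccinterval{0}{\pi}$, which is then settled by combining periodicity, evenness and monotonicity of $Q$ --- is the same inequality the paper establishes (there written as $\nabla_k Q\geq 0$ on $\ccinterval{-\pi}{0}$, i.e.\ the mirror image of your statement under $\phase\mapsto-\phase$). The packaging differs in two ways. First, the paper verifies the sign of $\nabla_kQ$ by splitting $\ccinterval{-\pi}{0}$ into the three subintervals $\ccinterval{-\pi}{-\pi+k}$, $\ccinterval{-\pi+k}{-k}$, $\ccinterval{-k}{0}$; this ordering forces the restriction $k\leq\pi/2$, and the range $\pi/2\leq k<\pi$ is then recovered separately via the identity $\calA_{\pi-k}Q=-\calA_k\calT Q$ together with $\calT:\calC\to-\calC$. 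Your circular-distance reformulation $Q=\widehat{Q}\circ d$ with $\widehat{Q}$ nonincreasing turns the same inequality into $d\at{a-k}\leq d\at{a+k}$, whose short case analysis goes through uniformly for all $0<k<\pi$, so you dispense with that reduction entirely; this is a genuine and slightly cleaner structural simplification. Second, for the first three assertions the paper merely states that they follow directly from the definition, whereas you supply explicit arguments (Fubini for the zero mean and the symmetry, and for compactness either the Fourier multiplier $\sin\at{nk}/n\to0$ or the embedding $\fspaceW^{1,2}\at\La\hookrightarrow\fspaceL^2\at\La$); both compactness routes are valid. The only point to watch is measure-theoretic: since the unimodality in \eqref{Definition.C.unimodal} holds only almost everywhere, the representation $Q=\widehat{Q}\circ d$ and the pointwise inequality should be read for a suitable representative and for a.e.\ $a$, which is all your integral identity requires.
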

\begin{proof}
The first three assertions follow directly from \eqref{Def.Operator.A}. It remains to prove
that $\calC$ is invariant under $\calA_k$ for $0<{k}\leq\pi/2$; the claim for
$\pi/2\leq{k}<\pi$ then follows from \eqref{Operator.A.Symmetry} and the fact that
$\calT:\calC\to-\calC$. Let $Q\in\calC$ be given, and notice that $\calA_k{Q}\in\calH$ is
even with weak derivative $P=\nabla_k{Q}\in\fspaceL^2$. For
$\phase\in\ccinterval{-\pi}{-\pi+k}$, the periodicity and the evenness of $Q$ imply
$Q\at{\phase-k}=Q\at{2\pi+\phase-k}=Q\at{-2\pi-\phase+k}$ where
$-\pi\leq-2\pi-\phase+k\leq\phase+k$, so the monotonicity of $Q$ in $\ccinterval{-\pi/2}{0}$
gives $2P\at\phase=Q\at{\phase+k}-Q\at{\phase-k}=Q\at{\phase+k}-Q\at{-2\pi-\phase+k}\geq0$.
Similarly, for $\phase\in\ccinterval{-\pi+k}{-k}$ and $\phase\in\ccinterval{-k}{0}$ we have
$2P\at\phase=Q\at{\phase+k}-Q\at{\phase-k}\geq0$ and
$2P\at\phase=Q\at{\phase+k}-Q\at{\phase-k}=Q\at{-\phase-k}-Q\at{\phase-k}\geq0$,
respectively. In summary, $\calA_k{Q}$ is increasing on $\ccinterval{-\pi/2}{0}$, and since
it is also even, the proof is complete.
\end{proof}
Lemma~\ref{Lem.Operator.A.Props} guarantees that both $\wh{\calB}_\kappa$ and
$\wt{\calB}_\kappa$ are symmetric and compact, and map $\calH\to\calH$ and $\calC\to\calC$.
\begin{lemma}
\label{Lem.Invariance.Props.B}
For given $q\in\Rset$ and $0<\kappa\leq\pi/2$, Theorem~\ref{Theo.Existence.Abstract} holds
with $\Psi_q$ instead of $\Psi$ if $\calC$ is given by \eqref{Definition.C.unimodal}, and if
$\calB$ is replaced by either $\wh{\calB}_\kappa$ or $\wt{\calB}_\kappa$.
\end{lemma}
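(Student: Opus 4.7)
The plan is to verify, for the choices $\Psi=\Psi_q$, $\calB\in\{\wh\calB_\kappa,\wt\calB_\kappa\}$, and $\calC$ as in \eqref{Definition.C.unimodal}, all the ambient hypotheses of \S\ref{sec:proof.I} together with the three cone conditions of Theorem \ref{Theo.Existence.Abstract}. The ambient part is essentially automatic: formula \eqref{Dual.Pot} yields $\Psi_q\at{0}=\Psi_q^\prime\at{0}=0$ and $\Psi_q^{\prime\prime}\at\zeta=\Psi^{\prime\prime}\at{q+\zeta}\in\ccinterval{\ul{c}}{\ol{c}}$, so both \eqref{Normalised.Psi} and Assumption \ref{Main.Ass} carry over to $\Psi_q$, while the compactness, symmetry, and cone-mapping property for $\wh\calB_\kappa$ and $\wt\calB_\kappa$ follow from Lemma \ref{Lem.Operator.A.Props} combined with \eqref{Operator.A.Symmetry} and $\calT\at{\calC}\subset-\calC$.

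Convexity of $\calC$ is immediate from \eqref{Definition.C.unimodal}, and closedness in $\fspaceL^2$ I would obtain by extracting an almost-everywhere convergent subsequence from any $\fspaceL^2$-convergent sequence in $\calC$. For the condition $\sup_{\calC}\calF>0$ I would test with $Q_0\at\phase=\cos\phase\in\calC$: a direct computation gives $\calA_\kappa Q_0=\at{\sin\kappa}Q_0$, and since $\calT Q_0=-Q_0$ both $\wh\calB_\kappa Q_0$ and $\wt\calB_\kappa Q_0$ equal $\kappa^{-1}\sin\at\kappa\,Q_0$, so $\calF\at{Q_0}=\tfrac{1}{2}\kappa^{-1}\sin\at\kappa\norm{Q_0}_2^2>0$.

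The main obstacle is the invariance of $\calC$ under the gradient flow \eqref{Gradient.Flow}. My strategy is to prove the discrete invariance $\calI_\tau\at{\calC}\subset\calC$ for all sufficiently small $\tau>0$ and then to pass to the continuous flow via an Euler-polygon argument on bounded time intervals, using closedness of $\calC$. For $Q\in\calC$, evenness and zero mean of $\calI_\tau\at{Q}$ are immediate; for unimodality I plan to expand the difference $\at{\calI_\tau Q}\at{\phase_1}-\at{\calI_\tau Q}\at{\phase_2}$ with $0\leq\phase_1\leq\phase_2\leq\pi$ and observe that the contributions from $\at{1-\tau}Q$ and $\tau\calB Q$ are non-negative, while the remaining contribution $-\tau\sigma\at{Q}\bato{\Psi_q^\prime\at{Q\at{\phase_1}}-\Psi_q^\prime\at{Q\at{\phase_2}}}$ is bounded below by $-\tau\sigma\at{Q}\ol{c}\bato{Q\at{\phase_1}-Q\at{\phase_2}}$ via the mean value theorem and Assumption \ref{Main.Ass}. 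The total is therefore non-negative whenever $\tau\at{1+\sigma\at{Q}\ol{c}}\leq 1$.

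To make this choice of $\tau$ uniform along any trajectory two quantitative bounds are needed. First, on $\calC$ one has $\sigma\at{Q}\geq 0$: since $\Psi_q^\prime$ is increasing and $Q$ is even and unimodal one checks that $\calP\at{Q}\in\calC$, and the Chebyshev sum inequality applied to the zero-mean monotone restrictions of $\calP\at{Q}$ and $\calB Q$ to $\ccinterval{0}{\pi}$ gives $\skp{\calP\at{Q}}{\calB Q}\geq 0$. Second, the bound $\skp{\Psi_q^\prime\at{Q}}{Q}\geq\ul{c}\norm{Q}_2^2$ from \eqref{Lem:General.Props.Eqn1} together with Cauchy--Schwarz yields $\norm{\calP\at{Q}}_2\geq\ul{c}\norm{Q}_2$, which combined with the operator-norm estimate for $\calB$ produces a uniform upper bound on $\sigma\at{Q}$. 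Since $\calW$ is conserved along the flow and controls $\norm{Q}_2$ from above and below by \eqref{Lem:General.Props.Eqn1}, a single $\tau_0>0$ suffices on any trajectory through $\calC\setminus\{0\}$, at which point Theorem \ref{Theo.Existence.Abstract} applies verbatim.
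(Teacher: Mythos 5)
Your proposal is correct and follows essentially the same route as the paper: both reduce the cone invariance to invariance of the explicit Euler map $\calI_\tau$ for sufficiently small $\tau>0$ (the paper by writing the non-$\calB$ part as composition with a non-decreasing scalar function $G_{\tau,\,\si,\,\eta}$, you by the equivalent two-point difference estimate using $\Psi_q^{\prime\prime}\leq\ol{c}$) and then pass to the limit $\tau\to0$ using convexity and closedness of $\calC$. Your explicit a priori bound $\abs{\si\at{Q}}\leq\norm{\calB}/\ul{c}$ and the Chebyshev argument for $\si\at{Q}\geq0$ are minor quantitative refinements of the paper's appeal to the continuous dependence of $\si$ and $\eta$ on $Q\neq0$, but the substance of the argument is identical.
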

\begin{proof}
The cone $\calC$ is convex and closed, and we have $\cos\in\calC$ with
\begin{math}
\nskp{\cos}{\wh{\calB}_\kappa\cos}=
\nskp{\cos}{\wt{\calB}_\kappa\cos}=\tfrac{1}{2}{\kappa}^{-1}{\sin\kappa}>0.
\end{math}
Therefore, it remains to show that $\calC$ is invariant under the gradient flow
\eqref{Gradient.Flow}. To this end we remark that $\calC$ is invariant under the action of
the operator $Q\mapsto{G}\at{Q}-\fint_{\La}G\at{Q}\dint\phase$ if and only if the function
$G:\Rset\to\Rset$ is non-decreasing. We now consider the Euler scheme of the gradient flow
\eqref{Gradient.Flow.Discrete} with time step $\tau>0$, that is
\begin{align*}
Q\mapsto\calI_\tau\at{Q}={G}_{\tau,\,\si\at{Q},\,\eta\at{Q}}\at{Q}+\tau\calB{Q},
\end{align*}
where the family of nonlinear functions $G_{\tau,\,\si,\,\eta}:\Rset\to\Rset$ is given by
\begin{align*}
G_{\tau,\,\si,\,\eta}\at\zeta=\zeta-\tau\zeta-\tau{\si}\Psi^\prime\at{\zeta}+\tau\sigma\eta.
\end{align*}
Thanks to Assumption~\ref{Main.Ass}, for each bounded set $U\subset\Rset^2$ there exists
$\bar\tau>0$ such that $\frac{\dint}{\dint\zeta}G_{\tau,\,\si,\,\eta}\at{\zeta}\geq0$ holds
true for all $0<\tau\leq\bar\tau$, all $\zeta\in\Rset$, and all $\pair{\si}{\eta}\in{U}$.
From this, the invariance of $\calC$ under the action of $\tau\calB$, and the continuous
dependence of $\si$ and $\eta$ on $Q\neq0$, we conclude that for each ball
$B\subset\fspaceL^2\setminus\{0\}$ there exists $\bar\tau>0$ such that $\calI_\tau$ maps
$B\cap\calC$ into $\calC$ for all $0<\tau\leq\bar\tau$. The claimed invariance of the
gradient flow now follows by passing to the limit $\tau\to{0}$.
\end{proof}
We are now able to concretize the informal existence result from the introduction as follows.
\begin{theorem}
\label{Theo:Main.Result}
Let $0<\kappa\leq\pi/2$, $q\in\Rset$, and $\ga>0$ be given. Then, there exist profiles
$\wh{Q},\wt{Q}\in\calC$ along with multipliers $\wh{\si},\wt{\si}>0$ and $\wh{\eta}$,
$\wt{\eta}$ such that
\begin{align*}
\wh{\si}\bat{\Psi^\prime_q\nat{\wh{Q}}-\wh{\eta}}=\wh{\calB}_\kappa\wh{Q},\qquad
\wt{\si}\bat{\Psi^\prime_q\nat{\wt{Q}}-\wt{\eta}}=\wt{\calB}_\kappa\wt{Q}.
\end{align*}
In particular, the profile $\wh{V}=\Psi^\prime_q\nat{\wh{Q}}-\wh{\eta}\in\calC$ solves
\begin{align*}
\wh{\om}\tfrac{\dint}{\dint\phase}\wh{V}=\nabla_\kappa\Phi^\prime\bat{\wh{v}+\wh{V}}
\end{align*}
with $\wh{\om}=\kappa\,\wh{\si}>0$ and $\wh{v}=\wh{\eta}+\Psi^\prime\bat{q}$, whereas
$\wt{V}=\Psi^\prime_q\nat{\wt{Q}}-\wt{\eta}\in\calC$ solves
\begin{align*}
\wt{\om}\tfrac{\dint}{\dint\phase}\wt{V}=\nabla_{\pi-\kappa}\Phi^\prime\bat{\wt{v}+\wt{V}}
\end{align*}
with $\wt{\om}=\kappa\,\wt{\si}>0$ and $\wt{v}=\wt{\eta}+\Psi^\prime\at{q}$.
\end{theorem}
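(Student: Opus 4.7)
The plan is to let the abstract machinery from \S\ref{sec:proof.I} do the heavy lifting and then unwind the normalisations of \S\ref{sec:prelim.Dual} to recover the primal wavetrain equation. First I would apply Lemma~\ref{Lem.Invariance.Props.B} twice, once with $\calB=\wh{\calB}_\kappa$ and once with $\calB=\wt{\calB}_\kappa$. Each application verifies the three hypotheses of Theorem~\ref{Theo.Existence.Abstract} for the unimodal cone $\calC$ of \eqref{Definition.C.unimodal}, and therefore delivers profiles $\wh{Q},\wt{Q}\in\calC$ solving the two dual equations displayed in the statement, with multipliers $\wh{\si}=\si\at{\wh{Q}}>0$, $\wt{\si}=\si\at{\wt{Q}}>0$, $\wh{\eta}=\eta\at{\wh{Q}}$, and $\wt{\eta}=\eta\at{\wt{Q}}$.

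Next I would verify that the primal profiles $\wh{V}:=\Psi_q^\prime\at{\wh{Q}}-\wh{\eta}$ and $\wt{V}$ defined analogously lie in $\calC$. This is immediate: by Assumption~\ref{Main.Ass} the function $\Psi_q^\prime$ is strictly increasing, so pointwise composition preserves evenness and unimodality, while the constant $\wh{\eta}=\fint_\La\Psi_q^\prime\at{\wh{Q}}\dint\phase$ is tailored precisely so that $\wh{V}$ has zero mean. To pass to the wavetrain equation in the hat case, I would use $\wh{\calB}_\kappa=\kappa^{-1}\calA_\kappa$ to rewrite the dual equation as $\wh{\om}\,\wh{V}=\calA_\kappa\wh{Q}$ with $\wh{\om}:=\kappa\wh{\si}>0$. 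Differentiating in $\phase$, using $\frac{\dint}{\dint\phase}\calA_\kappa=\nabla_\kappa$, and exploiting the inverse dual substitution $\wh{Q}=\Phi^\prime\at{\wh{v}+\wh{V}}-q$ with $\wh{v}:=\wh{\eta}+\Psi^\prime\at{q}$ then yields the first asserted equation, because $\nabla_\kappa$ annihilates the constant $q$. The tilde case is identical, except that $\wt{\calB}_\kappa=\kappa^{-1}\calA_{\pi-\kappa}$ switches the averaging length and therefore the resulting wave number is $\pi-\kappa$.

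The main obstacle has already been surmounted upstream, namely the invariance of $\calC$ under the gradient flow \eqref{Gradient.Flow} that is established inside Lemma~\ref{Lem.Invariance.Props.B} through the monotone-function argument applied to the discrete Euler step $\calI_\tau$. Granting this, the present theorem amounts to matching constants and invoking the elementary identity $\frac{\dint}{\dint\phase}\calA_k=\nabla_k$, while the positivity $\wh{\si},\wt{\si}>0$ is inherited directly from the closing line of Theorem~\ref{Theo.Existence.Abstract}.
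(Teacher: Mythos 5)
Your proposal is correct and follows exactly the paper's route: invoke Lemma~\ref{Lem.Invariance.Props.B} together with Theorem~\ref{Theo.Existence.Abstract} for each of $\wh{\calB}_\kappa$ and $\wt{\calB}_\kappa$, then undo the normalisations of \S\ref{sec:prelim.Dual}. The paper compresses the second step into ``straightforward computations,'' whereas you spell them out (monotonicity of $\Psi_q^\prime$ giving $\wh{V},\wt{V}\in\calC$, and $\tfrac{\dint}{\dint\phase}\calA_k=\nabla_k$ with $\wh{Q}=\Phi^\prime\nat{\wh{v}+\wh{V}}-q$), all of which checks out.
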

\begin{proof}
The existence of both $\wh{Q}$ and $\wt{Q}$ is a consequence of
Lemma~\ref{Lem.Invariance.Props.B} and Theorem~\ref{Theo.Existence.Abstract}. The remaining
assertions then follow by straight forward computations.
\end{proof}
We conclude this section with some remarks concerning the assumptions and assertions of
Theorems \ref{Theo.Existence.Abstract} and \ref{Theo:Main.Result}.
\begin{enumerate}
\item
The strict convexity of $\Psi$ (or equivalently, of $\Phi$) is crucial for our existence
proof. In fact, it is truly necessary for relating the integral equations
\eqref{Dual.Problem.I} and \eqref{Dual.Problem.II} to the wavetrain equation
\eqref{TWEquation.Version1}. It also plays an important role in the proof of Theorem
\ref{Theo.Existence.Abstract} as it guarantees the weak compactness of $\calN_\ga$. The
assumptions about upper and lower bounds for $\Psi^{\prime\prime}$, however, are made for
convenience and might be weakened for the price of more technical effort.
\item
It would be highly desirable to give uniqueness results that classify wavetrains up to
phase shifts and scalings of the periodic cell. In analogy to the harmonic case, we
conjecture that the three-parameter family from Theorem\eqref{Theo:Main.Result} contains
\emph{all} wavetrains which satisfy the profile constraint $V\in\calC$, but we are not
able to prove this. Another open problem is the stability of wavetrains.
\item
Obviously, in Theorem \ref{Theo.Existence.Abstract} we can choose $\calC=\calH$ to obtain
solutions $Q\in\calN_\ga$ to the original optimization problem
\eqref{Optimization.Problem}. In this case, \eqref{TWEquation.Abstract} is provided by
the Lagrangian multiplier rule. If $\calC$, however, is a proper subset of $\calH$ then
the validity of \eqref{TWEquation.Abstract} does not follow from the multiplier rule but
is a consequence of the invariance properties of $\calC$. We also remark that numerical
simulations indicate that each maximizer of $\calF$ in $\calN_\ga$ is (up to phase
shifts) unimodal and even, and hence contained in $\calC$. We therefore conjecture that
\eqref{Optimization.Problem} and its refined variant from Theorem
\ref{Theo.Existence.Abstract} have the same solutions, but a rigorous proof is not
available.
\end{enumerate}
%
%
%
%-----------------------------------------------------------------------------------------------
\section{Numerical simulations}\label{sec:num}
%-----------------------------------------------------------------------------------------------
%
%
%
In this section we illustrate our analytical findings by numerical simulations, which are
computed by the following implementation of the discrete gradient flow
\eqref{Gradient.Flow.Discrete} with $\Psi_q$ instead of $\Psi$, see \eqref{Dual.Pot}, and
$\calB=\wh{\calB}_k$ and $\calB=\wt{\calB}_k$ for $k\in\ccinterval{0}{\pi/2}$ and
$k\in\ccinterval{\pi/2}{\pi}$, respectively:
\begin{figure}[t!]
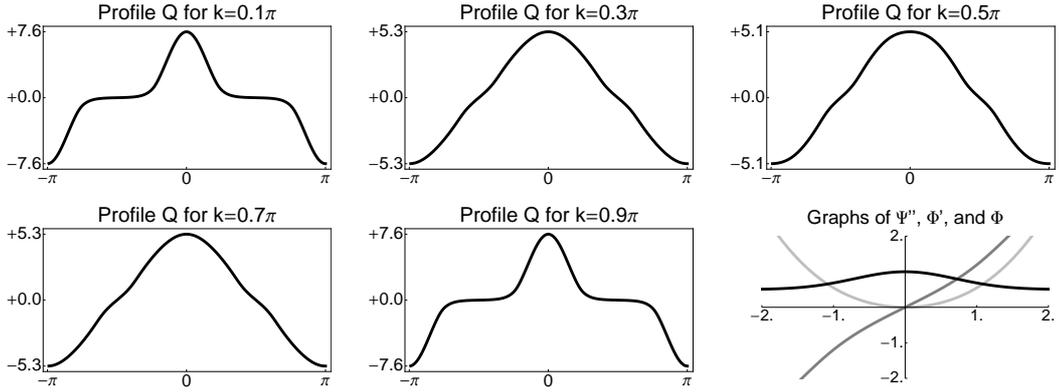
%
\centering{%
\renewcommand{\arraystretch}{\figstretch}%
\begin{tabular}{lcr}%
\begin{minipage}{\figwidth}%
\includegraphics[width=\textwidth, draft=\figdraft]%
{\figfile{ex_1_prof_1}}%
\end{minipage}%
&%
\begin{minipage}{\figwidth}%
\includegraphics[width=\textwidth, draft=\figdraft]%
{\figfile{ex_1_prof_2}}%
\end{minipage}%
&%
\begin{minipage}{\figwidth}%
\includegraphics[width=\textwidth, draft=\figdraft]%
{\figfile{ex_1_prof_3}}%
\end{minipage}%
\\%
\begin{minipage}{\figwidth}%
\includegraphics[width=\textwidth, draft=\figdraft]%
{\figfile{ex_1_prof_4}}%
\end{minipage}%
&%
\begin{minipage}{\figwidth}%
\includegraphics[width=\textwidth, draft=\figdraft]%
{\figfile{ex_1_prof_5}}%
\end{minipage}%
&%
\begin{minipage}{\figwidth}%
\hspace{.075\textwidth}%
\includegraphics[width=.925\textwidth, draft=\figdraft]%
{\figfile{ex_1_pot}}%
\end{minipage}%
\end{tabular}%
}%
\caption{%
Numerical simulations for example \eqref{Prms.Ex1} with several values of $k$. The plots show
the graph of the dual profile, i.e., $Q\at{\phase}$ against $\phase\in\La$. The lower right
picture sketches the graphs of $\Psi^{\prime\prime}$ (Black), $\Phi^{\prime}$ (Dark Grey),
and $\Phi$ (Light Grey).
}%
\label{Fig:Num1}%
\end{figure}%
\begin{figure}[t!]
\centering{%
\renewcommand{\arraystretch}{\figstretch}%
\begin{tabular}{lcr}%
\begin{minipage}{\figwidth}%
\includegraphics[width=\textwidth, draft=\figdraft]%
{\figfile{ex_2_prof_1}}%
\end{minipage}%
&%
\begin{minipage}{\figwidth}%
\includegraphics[width=\textwidth, draft=\figdraft]%
{\figfile{ex_2_prof_2}}%
\end{minipage}%
&
\begin{minipage}{\figwidth}%
\includegraphics[width=\textwidth, draft=\figdraft]%
{\figfile{ex_2_prof_3}}%
\end{minipage}%
\\%
\begin{minipage}{\figwidth}%
\includegraphics[width=\textwidth, draft=\figdraft]%
{\figfile{ex_2_prof_4}}%
\end{minipage}%
&%
\begin{minipage}{\figwidth}%
\includegraphics[width=\textwidth, draft=\figdraft]%
{\figfile{ex_2_prof_5}}%
\end{minipage}%
&
\begin{minipage}{\figwidth}%
\hspace{.075\textwidth}%
\includegraphics[width=.925\textwidth, draft=\figdraft]%
{\figfile{ex_2_pot}}%
\end{minipage}%
\end{tabular}%
}%
\caption{%
Numerical results for example \eqref{Prms.Ex2}.
}%
\label{Fig:Num2}%
\vspace{0.02\textwidth}%
\centering{%
\renewcommand{\arraystretch}{\figstretch}%
\begin{tabular}{lcr}%
\begin{minipage}{\figwidth}%
\includegraphics[width=\textwidth, draft=\figdraft]%
{\figfile{ex_3_prof_1}}%
\end{minipage}%
&%
\begin{minipage}{\figwidth}%
\includegraphics[width=\textwidth, draft=\figdraft]%
{\figfile{ex_3_prof_2}}%
\end{minipage}%
&%
\begin{minipage}{\figwidth}%
\includegraphics[width=\textwidth, draft=\figdraft]%
{\figfile{ex_3_prof_3}}%
\end{minipage}%
\\%
\begin{minipage}{\figwidth}%
\includegraphics[width=\textwidth, draft=\figdraft]%
{\figfile{ex_3_prof_4}}%
\end{minipage}%
&%
\begin{minipage}{\figwidth}%
\includegraphics[width=\textwidth, draft=\figdraft]%
{\figfile{ex_3_prof_5}}%
\end{minipage}%
&%
\begin{minipage}{\figwidth}%
\hspace{.075\textwidth}%
\includegraphics[width=.925\textwidth, draft=\figdraft]%
{\figfile{ex_3_pot}}%
\end{minipage}%
\end{tabular}%
}%
\caption{%
Numerical results for example \eqref{Prms.Ex3}.
}%
\label{Fig:Num3}%
\end{figure}%
\begin{figure}[t!]
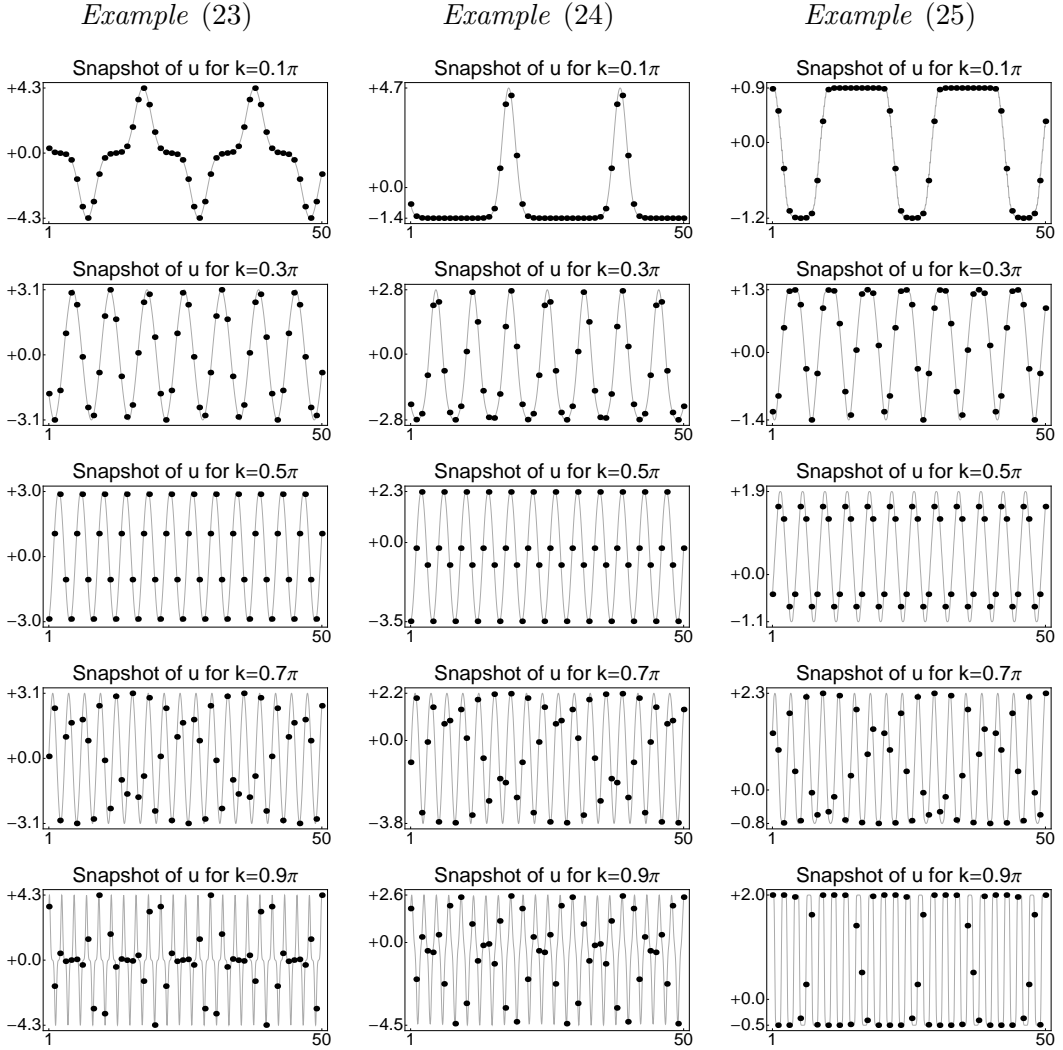
%
\centering{%
\renewcommand{\arraystretch}{\figstretch}%
\begin{tabular}{lcr}%
\begin{minipage}{\figwidth}%
\centering{\emph{Example \eqref{Prms.Ex1}}}%
\end{minipage}%
&%
\begin{minipage}{\figwidth}%
\centering{\emph{Example \eqref{Prms.Ex2}}}%
\end{minipage}%
&%
\begin{minipage}{\figwidth}%
\centering{\emph{Example \eqref{Prms.Ex3}}}%
\end{minipage}%
\vspace{-0.02\textheight}%
\\%
\begin{minipage}{\figwidth}%
\includegraphics[width=\textwidth, draft=\figdraft]%
{\figfile{ex_1_force_1}}%
\end{minipage}%
&%
\begin{minipage}{\figwidth}%
\includegraphics[width=\textwidth, draft=\figdraft]%
{\figfile{ex_2_force_1}}%
\end{minipage}%
&%
\begin{minipage}{\figwidth}%
\includegraphics[width=\textwidth, draft=\figdraft]%
{\figfile{ex_3_force_1}}%
\end{minipage}%
\\%
\begin{minipage}{\figwidth}%
\includegraphics[width=\textwidth, draft=\figdraft]%
{\figfile{ex_1_force_2}}%
\end{minipage}%
&%
\begin{minipage}{\figwidth}%
\includegraphics[width=\textwidth, draft=\figdraft]%
{\figfile{ex_2_force_2}}%
\end{minipage}%
&%
\begin{minipage}{\figwidth}%
\includegraphics[width=\textwidth, draft=\figdraft]%
{\figfile{ex_3_force_2}}%
\end{minipage}%
\\%
\begin{minipage}{\figwidth}%
\includegraphics[width=\textwidth, draft=\figdraft]%
{\figfile{ex_1_force_3}}%
\end{minipage}%
&%
\begin{minipage}{\figwidth}%
\includegraphics[width=\textwidth, draft=\figdraft]%
{\figfile{ex_2_force_3}}%
\end{minipage}%
&%
\begin{minipage}{\figwidth}%
\includegraphics[width=\textwidth, draft=\figdraft]%
{\figfile{ex_3_force_3}}%
\end{minipage}%
\\%
\begin{minipage}{\figwidth}%
\includegraphics[width=\textwidth, draft=\figdraft]%
{\figfile{ex_1_force_4}}%
\end{minipage}%
&%
\begin{minipage}{\figwidth}%
\includegraphics[width=\textwidth, draft=\figdraft]%
{\figfile{ex_2_force_4}}%
\end{minipage}%
&%
\begin{minipage}{\figwidth}%
\includegraphics[width=\textwidth, draft=\figdraft]%
{\figfile{ex_3_force_4}}%
\end{minipage}%
\\%
\begin{minipage}{\figwidth}%
\includegraphics[width=\textwidth, draft=\figdraft]%
{\figfile{ex_1_force_5}}%
\end{minipage}%
&%
\begin{minipage}{\figwidth}%
\includegraphics[width=\textwidth, draft=\figdraft]%
{\figfile{ex_2_force_5}}%
\end{minipage}%
&%
\begin{minipage}{\figwidth}%
\includegraphics[width=\textwidth, draft=\figdraft]%
{\figfile{ex_3_force_5}}%
\end{minipage}%
\end{tabular}%
}%
\caption{%
Snapshots of $u_j$ against $j=1\tdots50$ for the simulations from
Figure~\ref{Fig:Num1}--\ref{Fig:Num3} at a randomly chosen time $t$. Black points and grey
curves represent the data on the lattice $\Zset$ and the continuous carrier profiles $U=v+V$,
respectively. Under the evolution all points move with constant speed along the grey curves.
}
\label{Fig:Snapshots}%
\end{figure}%
\begin{enumerate}
\item
We sample the periodicity cell $\ccinterval{-\pi}{\pi}$ by grid points
$\phase_m=-\pi+2\pi{m}/M$ with $m=1\tdots{M}$.
\item
We approximate $Q$ by $Q_m=Q\at{\phase_m}$ and replace all integrals with respect to
$\phase$ by Riemann sums.
\item
After each update step according to \eqref{Gradient.Flow.Discrete}, we enforce the
conservation of $\calW\at{Q}$ by a scaling $Q_m\mapsto\la{Q}_m$, where the factor $\la$
is computed by a single Newton step.
\item
For given $k$ and $q$, we initialize the scheme with $Q_m=\alpha\cos\phase_m$, where the
amplitude $\alpha$ determines $\ga$ via
$\ga=\fint_\La\Psi_q\at{\alpha\cos\phase}\dint\phase$.
\end{enumerate}
Although this numerical scheme is rather simple it shows good convergence properties,
provided that $M$ is sufficiently large and the time step $\tau$ is sufficiently small. All
simulation presented below are performed with $\tau=0.1$ and $M=200$.
\bigpar
The first example concerns the data
\begin{align}
\label{Prms.Ex1}
\Psi^{\prime\prime}\at{\zeta}=\tfrac{1}{2}+\tfrac{1}{2}\exp\at{-\zeta^2}
,\qquad%
\alpha=5
,\qquad%
q=0,
\end{align}
where $\gamma$ is determined by $\alpha$ as described above. The numerical results are
presented in Figure~\ref{Fig:Num1}, which shows the dual profile $Q$ as function of $\phase$
for several values of $k$. We also refer to Figure~\ref{Fig:Snapshots}, which illustrates how
the wavetrains appear in snapshots $u_j=\Psi^\prime\at{q+Q\at{kj-\om{t}}}$ against
$j\in\Zset$. A special feature of this example is that $\Psi_q$ is even for $q=0$, and this
implies that \eqref{TWEquation.Abstract} is invariant under $Q\rightsquigarrow{-\calT{Q}}$.
We therefore find that the profiles are the same for $k$ and $\pi-k$, and exhibit the further
symmetry $Q\at{\phase+\pi}=-Q\at\phase$.
\par
The second example is computed with
\begin{align}
\label{Prms.Ex2}
\Psi^{\prime\prime}\at{\zeta}=1-\tfrac{1}{\pi}\arctan\at{2\zeta}
,\qquad%
\alpha=3
,\qquad%
q=0,
\end{align}
and illustrates the generic case that $\Psi_q$ is not even. In Figure \ref{Fig:Num2} we
therefore observe that the profiles do not satisfy $Q=-\calT{Q}$ anymore, and are no longer
the same for $k$ and $\pi-k$.
\par
Finally, the third example uses
\begin{align}
\label{Prms.Ex3}
\Psi^{\prime\prime}\at{\zeta}=\tfrac{1}{4}\exp\at{2\zeta}-\tfrac{1}{2}\zeta-\tfrac{1}{4}
,\qquad%
\alpha=2
,\qquad%
q=0,
\end{align}
and is shown in Figure~\ref{Fig:Num2}. Here $\Psi$ is still convex but does not satisfy
Assumption~\ref{Main.Ass} due to $\Psi^{\prime\prime}\at{0}=0$. Nevertheless, the numerical
scheme works very well and provides wavetrains for all values of $k$.
\appendix
%
%
%-----------------------------------------------------------------------------------------------
\section{Lagrangian and Hamiltonian structures for scalar
conservation laws}\label{sec:appendix}
%-----------------------------------------------------------------------------------------------
%
%
For the readers convenience we summarize some basic fact about the Lagrangian and Hamiltonian
structures of discrete scalar conservation laws. Both structures are very similar to the
respective structures for the KdV equation, compare for instance \cite{AM78}, and originate
from the skew-symmetry of centred difference operators. Specifically, for $\nabla$ with
$\nabla{y}_j=\tfrac12\at{y_{j+1}-y_{j-1}}$ the identity
\begin{align*}%
\sum_{j\in{\Zset}}y_j\nabla\tilde{y}_j=
-\sum_{j\in{\Zset}}\tilde{y}_j\nabla{y}_j
\end{align*}
holds for all sequences $y,\tilde{y}$ for which the series are well-defined. Notice that the
one-sided difference operators $\nabla^\pm$ with $\nabla^\pm{y}_j=\pm{y_{j\pm1}}\mp{y}_j$ are
not skew-symmetric but satisfy $\at{\nabla^\pm}^\ast=-\nabla^\mp$.
\par
To derive the \emph{Lagrangian structure} we assume that there exists $y=y_i\at{t}$ such that
$u_j=\nabla{y_j}$. This transform \eqref{Intro.Lattice} into
\begin{align}%
\label{App:Lattice.Integrated}
\nabla\dot{y}+\nabla\Phi^\prime\at{\nabla{y}}=0,
\end{align}%
and reveals that the \emph{action integral} is given by
\begin{align}%
\label{App:Lattice.ActionIntegral}
L\pair{\dot{y}}{y}
=%
-\tfrac{1}{2}\int_0^{t_\fin}\sum_{j\in\Zset}\dot{y}_j\nabla{y_j}\,\dint{t}%
-\int_0^{t_\fin}\sum_{j\in\Zset}\Phi\at{\nabla{y_j}}\,\dint{t},
\end{align}%
where $t_\fin$ is a given final time. In fact, computing the variational derivatives
$\partial_{\dot{y}}L=-\tfrac{1}{2}\nabla{y}$ and
$\partial_{y}L=\tfrac{1}{2}\nabla\dot{y}+\nabla\Phi^\prime\at{\nabla{y}}$ we easily check
that \eqref{App:Lattice.Integrated} is just the Euler-Lagrange equation
$-\tfrac{\dint}{\dint{t}}\partial_{\dot{y}}L+\partial_{y}L=0$.
\par
A special feature of $L$ is that the canonical momenta $\partial_{\dot{y}}L$ do not depend on
$\dot{y}$, and therefore we cannot apply the standard procedure to derive the Hamiltonian
structure. There is, however, a \emph{non-canonical Hamiltonian structure}. Using the
Hamiltonian
\begin{align}%
\notag%\label{App:DiscreteHamiltonian}
H\at{y}=\sum_{j\in\Zset}\Phi\at{\nabla{y_j}},
\end{align}%
the discrete scalar conservation law \eqref{Intro.Lattice} can be written as
\begin{align*}
\nabla\dot{y}=\partial_y{H}.
\end{align*}
This is indeed a Hamiltonian equation, where $\nabla$ acts as non-canonical symplectic
operator that corresponds to the symplectic product
\begin{align*}
\skp{\dot{y}}{y^\prime}_\sympl=-\sum_{j\in\Zset}\dot{y}_i\nabla{y_i^\prime}
=\sum_{j\in\Zset}y^\prime_i\nabla{\dot{y}_i}.
\end{align*}
The PDE \eqref{Intro.PDE1} has the same Lagrangian and Hamiltonian structures in the sense
that all formulas from above remain valid provided that we $\at{i}$ apply the scaling
\eqref{Intro.Scaling}, $\at{ii}$ replace sums over $j$ by integrals with respect to $\xi$,
and $\at{iii}$ employ the differential operator $D=\partial_\xi$ instead of $\nabla$. In
other words, the scalar conservation law \eqref{Intro.PDE1} is the Euler-Lagrange equation to
the action integral
\begin{align*}
L\pair{\partial\bar{y}}{\bar{y}}
=%
-\tfrac{1}{2}\int_0^{\tau_\fin}\int_\Rset
\partial_\tau\bar{y}D\bar{y}\,\dint{\xi}\dint{\tau} %
-\int_0^{\tau_\fin}\int_\Rset\Phi\at{D\bar{y}}\,\dint{\xi}\dint{\tau}.
\end{align*}%
It is moreover equivalent to
\begin{align*}
\partial_\tau{D}\bar{y}+{D}\Phi^\prime\at{{D}\bar{y}}=0,\qquad
\bar{u}=D\bar{y},
\end{align*}
which has Hamiltonian $H\at{\bar{y}}=\int_\Rset\Phi\at{D\bar{y}}\dint\xi$ and symplectic
product $\skp{\dot{y}}{y^\prime}_\sympl=\int_\Rset{}y^\prime{D}\dot{y}\dint\xi$. The
analogies between the structures of lattice and PDE are -- in view of the skew-symmetry of
both $\nabla$ and $D$ -- not surprising and exemplify the more general principles laid out in
\cite{GHM08}. Moreover, if we replace $D$ by the macroscopic difference operator
\begin{align*}
{D}_\eps\bar{y}\at{\xi}=\frac{\bar{y}\at{\xi+\eps}-\bar{y}\at{\xi-\eps}}{2\eps},
\end{align*}
we easily recover the scaled version of \eqref{Intro.Lattice}. In this sense the lattice
\eqref{Intro.Lattice} is just a \emph{variational integrator} of the scalar conservation law
\eqref{Intro.PDE1}, i.e., a semi-discrete scheme that respects the underlying variational and
symplectic structures.
\par
The very same idea also allows to derive variational integrators for the time discretization.
If we replace the continuous time derivative in \eqref{App:Lattice.ActionIntegral} by
$\tfrac{1}{2}h^{-1}\nat{y_{k+1,\,j}-y_{k-1,\,j}}$, where $h$ is the time step size and the
index $k$ refers to the $k^\text{th}$ time step, we obtain the discrete action integral
\begin{align*}%
L=%
-\frac{1}{8h}\sum_{k,\,j}\at{y_{k+1,\,j}-y_{k-1,\,j}}\at{y_{k,\,j+1}-y_{k,\,j-1}}
-\sum_{k,\,j}\Phi\at{\tfrac{1}{2}\at{y_{k,\,j+1}-y_{k,\,j-1}}},
\end{align*}
which depends on all the $y_{k,\,j}$'s. Evaluating the Euler-Lagrange equation
$\partial_{y_{k,j}}{L}=0$, and replacing $y$ by $u$, yields the fully discrete scheme
\begin{align}%
\label{App:Symplectic.Integrator}
u_{k+1,j}=u_{k-1,j}-h\Phi^\prime\at{u_{k,\,j+1}}+h\Phi^\prime\at{u_{k,\,j-1}},
\end{align}
which also served to compute the numerical examples from \S\ref{sec:intro}.
%
%-------------------------------------------------------------------------------------------
%                      Thanks to
%-------------------------------------------------------------------------------------------
%
\paragraph*{Acknowledgements}%
This work was supported by the EPSRC Science and Innovation award to the Oxford Centre for
Nonlinear PDE
(EP/E035027/1).%
%-------------------------------------------------------------------------------------------
%                      Literature
%-------------------------------------------------------------------------------------------
%
\providecommand{\bysame}{\leavevmode\hbox to3em{\hrulefill}\thinspace}
\providecommand{\MR}{\relax\ifhmode\unskip\space\fi MR }
% \MRhref is called by the amsart/book/proc definition of \MR.
\providecommand{\MRhref}[2]{%
  \href{http://www.ams.org/mathscinet-getitem?mr=#1}{#2}
} \providecommand{\href}[2]{#2}

\end{document}